\theoremstyle{plain}
 \newtheorem{thm}{Theorem}
 \newtheorem{cor}{Corollary}
 \newtheorem{lem}{Lemma}
\theoremstyle{definition}
 \newtheorem{rem}{Remark}
\def\RR{\mathbb R}
\def\CC{\mathbb C}
\def\NN{\mathbb N}
\def\ZZ{\mathbb Z}
\begin{document}
\begin{center}
{\Large On generalized weighted Hilbert matrices}\\ $\,$ \\
Emmanuel Preissmann, Olivier L\'ev\^eque\\
Swiss Federal Institute of Technology - Lausanne, Switzerland
\end{center}

\begin{abstract}
In this paper, we study spectral properties of generalized weighted Hilbert matrices. In particular, we establish results on the spectral norm, determinant, as well as various relations between the eigenvalues and eigenvectors of such matrices. We also study the asymptotic behaviour of the spectral norm of the classical Hilbert matrix.
\end{abstract}

\section{Introduction}
The classical infinite Hilbert matrices
\begin{equation} \label{eq:hilb_matrices}
T_\infty = \begin{pmatrix} \ddots & \ddots & \ddots & \ddots & \ddots & \ddots \\
\ddots & 0 & -1 & -\frac{1}{2} & -\frac{1}{3} & \ddots \\
\ddots & 1 & 0 & -1 & -\frac{1}{2} & \ddots\\
\ddots & \frac{1}{2} & 1 & 0 & -1 & \ddots\\
\ddots & \frac{1}{3} & \frac{1}{2} & 1 & 0 & \ddots\\
\ddots & \ddots & \ddots & \ddots & \ddots & \ddots
\end{pmatrix}
\quad \text{and} \quad
H_\infty = \begin{pmatrix} 1 & \frac{1}{2} & \frac{1}{3} & \frac{1}{4} & \cdots \\
\frac{1}{2} & \frac{1}{3} & \frac{1}{4} & \frac{1}{5} & \ddots \\
\frac{1}{3} & \frac{1}{4} & \frac{1}{5} & \frac{1}{6} & \ddots \\
\frac{1}{4} & \frac{1}{5} & \frac{1}{6} & \frac{1}{7} & \ddots \\
\vdots & \ddots & \ddots & \ddots & \ddots 
\end{pmatrix}
\end{equation}
have been widely studied in the mathematical literature, for a variety of good reasons (see \cite{C83} for a nice survey of their astonishing properties). In this paper, we present results and conjectures on spectral properties of these matrices and related types of matrices. We first review known results in Section \ref{sec:survey} and then introduce new results in Section \ref{sec:new} on generalized weighted Hilbert matrices of the form
$$
b_{m,n}(\mathbf{x},\mathbf{c})=\begin{cases}
0 & \text{if } m=n\\
\dfrac{c_m \, c_n}{x_m-x_n} & \text{if } m \ne n
\end{cases}
$$
Our results can be summarized as follows. Theorem \ref{thm:maj} below states a surprising property of these matrices: their spectral norm depends monotonically in the absolute values of their entries, a property known a priori only for matrices with positive entries. A second important result (Theorem \ref{thm:det}) is that the determinant of such matrices are polynomials in the square of their entries. We prove next in Lemma \ref{lem:5} a key relation between the eigenvalues and eigenvectors of these matrices, that lead to a chain of nice consequences (among which Corollaries \ref{cor:1} and \ref{cor:2}). The present work finds its roots in the seminal paper of Montgomery and Vaughan \cite{MV73}, which initiated the study of generalized Hilbert matrices.

{\bf Notations.}
Let $p > 1$. In what follows, $\Vert \mathbf{y} \Vert_p$ denotes the $\ell^p$-norm of the vector $\mathbf{y} \in \CC^S$, i.e.
$$
\Vert \mathbf{y} \Vert_p:= \left( \sum_{k=1}^S \vert y_k\vert^p \right)^{1/p}
$$
and for an $S \times S$ matrix $M$, $\Vert M \Vert_p$ denotes the matrix norm induced by the above vector norm, i.e.
$$
\Vert M \Vert_p := \sup_{\Vert \mathbf{y} \Vert_p =1} \Vert M\mathbf{y} \Vert_p
$$
In the particular case $p=2$, the following simplified notation will be adopted:
$$
\Vert \mathbf{y} \Vert_2 = \Vert \mathbf{y} \Vert  \text{ (Euclidean norm)} \quad \text{and} \quad \Vert M \Vert_2=\Vert M \Vert
$$
Notice in addition that when $M$ is normal (i.e.~when $MM^*=M^*M$, where $M^*$ stands for the complex-conjugate transpose of the matrix $M$), the above norm is equal to the spectral norm of $M$, i.e.
$$
\Vert M \Vert = \sup\{ |\lambda| \; : \; \lambda \in \text{Spec}(M)\}
$$


\section{A survey of classical results and conjectures} \label{sec:survey}

\subsection{Hilbert's inequalities}
The infinite-dimensional matrices presented in \eqref{eq:hilb_matrices} are two different versions of the classical Hilbert matrix. Notice first that $T_\infty$ is a Toeplitz matrix (i.e.~a matrix whose entry $n,m$ only depends on the difference $m-n$), while $H_\infty$ is a Hankel matrix (i.e.~a matrix whose entry $n,m$ only depends on the sum $n+m$).

The original Hilbert inequalities state (see \cite[p.~212]{HLP52}) that for ${\mathbf u},{\mathbf v} \in \ell^2(\ZZ;\CC)$ (resp.~in ${\mathbf u},{\mathbf v} \in \ell^2(\NN;\CC)$) with $\Vert {\mathbf u} \Vert = \Vert {\mathbf v} \Vert =1$,
$$
\Big| \sum_{m,n \in \ZZ} u_m \, (T_\infty)_{m,n} \, v_n  \Big| \le \pi \quad \text{resp.} \quad \Big| \sum_{m,n \in \NN} u_m \, (H_\infty)_{m,n} \, v_n \Big| \le \pi
$$
where $\pi$ cannot be replaced by a smaller constant\footnote{Notice that Hilbert had proved originally these inequalities with $2\pi$ instead of $\pi$; the optimal constant was found later by Schur.}. This is saying that $T_\infty$ (resp.~$H_\infty$) is a bounded operator in $\ell^2(\ZZ;\CC)$ (resp.~in $\ell^2(\NN;\CC)$), with norm equal to $\pi$.

Hardy, Littlewood and P\'olya obtained an explicit expression for $\Vert H_\infty \Vert_p$ in \cite[p.~227]{HLP52}, for all values of $p>1$:
$$
\Vert H_\infty \Vert_p = \frac{\pi}{\sin(\pi/p)}, \quad p>1
$$
and Titchmarsh proved in \cite{T26} that $\Vert T_\infty \Vert_p< \infty$. Also, $\Vert T_\infty \Vert_p$ is clearly greater than or equal to $\Vert H_\infty \Vert_p$, as $H_\infty$ may be seen as the lower-left corner of $T_\infty$ (up to a column permutation), but no exact value is known for it (except in the case where $p=2^n$ or $p=2^n/(2^n-1)$ for some integer $n \ge 1$; see \cite{L07,L09} for a review of the subject).

Consider now the corresponding finite-dimensional matrices $T_R$ and $H_R$ of size $R \times R$:
$$
(T_R)_{m,n}=\begin{cases} 0 & \text{if } m=n\\ \dfrac{1}{m-n} & \text{if } m \ne n \end{cases}
\quad \text{and} \quad (H_R)_{m,n}=\frac{1}{m+n-1} \quad 1 \le m,n \le R
$$
The above Hilbert inequalities imply that for every integer $R \ge 1$, both
\begin{equation} \label{eq:Hilbert}
\Vert T_R \Vert < \pi \quad \text{and} \quad \Vert H_R \Vert < \pi
\end{equation}
Clearly also, both $\Vert T_R \Vert$ and $\Vert H_R \Vert$ increase as $R$ increases, and $\lim_{R\to\infty} \Vert T_R \Vert = \lim_{R\to\infty} \Vert H_R \Vert =\pi$.

A question of interest is the convergence speed of $\Vert H_R \Vert$ and $\Vert T_R \Vert$ towards their common limiting value~$\pi$. Observing that up a column permutation, $H_R$ can be seen as the lower-left corner of $T_{2R+1}$, we see that $\Vert H_R \Vert \le \Vert T_{2R+1} \Vert$ for every integer $R\ge 1$. This hints at a slower convergence speed for the matrices $H_R$ than for the matrices $T_R$. Indeed, Wilf et de Bruijn (see \cite{W70}) have shown that
$$
\pi - \Vert H_R \Vert \sim \frac{\pi^{5}}{2 \, (\log(R))^2} \quad \text{as } R \to \infty
$$
whereas there exist $a,b>0$ such that 
\begin{equation} \label{eq:conv_speed}
\frac{a}{R} < \pi - \Vert T_R \Vert < \frac{b \, \log(R)}{R} \quad (R \ge 2)
\end{equation}
We will prove these inequalities at the end of the present paper. The lower bound has already been proved by H.~Montgomery (see \cite{www}) and it has been conjectured in \cite{P85} and independently by Montgomery, that the upper bound in the previous inequality is tight, i.e.~that
$$
\pi - \Vert T_R \Vert \sim \frac{c \, \log R}{R} \quad \text{as } R \to \infty
$$
We also provide some numerical indication of this conjecture at the end of the paper.


\subsection{Toeplitz matrices and Grenander-Szeg\"o's theorem} \label{sec:toeplitz}

We review here the theory developed by Grenander and Szeg\"o in \cite{GZ84} for analyzing the asymptotic spectrum of Toeplitz matrices. In particular, we cite below their result on the convergence speed of the spectral norm of such matrices.

Let $(c_r, \, r \in \ZZ)$ be a sequence of complex numbers such that
\begin{equation} \label{eq:summable}
\sum_{r \in \ZZ} |c_r|<\infty
\end{equation}
and let us define the corresponding function, or {\em symbol}:
$$
f(x) = \sum_{r \in \ZZ} c_r \exp(irx), \quad x \in [0,2\pi]
$$
Because of assumption made on the Fourier coefficients $c_r$, $f$ is a continuous function such that $f(0)=f(2\pi)$ (equivalently, $f$ can be viewed as a continuous $2\pi$-periodic function on $\RR$).

Let now $C_R$ be the $R \times R$ matrix defined a
$$
(C_R)_{m,n} = c_{m-n}, \quad 1 \le m,n \le R
$$
The following fact can be verified by a direct computation: for any vector $\mathbf{u} \in \CC^R$ such that $\Vert \mathbf{u} \Vert^2 = \sum_{1\le n \le R} |u_n|^2=1$,
\begin{equation} \label{eq:fourier}
\mathbf{u}^* C_R \mathbf{u} = \int_0^{2\pi} f(x) \, |\phi(x)|^2 \, dx
\end{equation}
where $f(x)$ is the above defined function and $\phi(x)=\frac{1}{\sqrt{2\pi}} \sum_{1\le n \le R} u_n \, \exp(i(n-1)x)$.

Let us now assume that $C_R$ is a normal matrix (i.e.~$C_R C_R^*=C_R^* C_R$); this is the case e.g.~when $f$ is a real-valued function (in which case $C_R$ is Hermitian, i.e.~$C_R^*=C_R$). As $\Vert \mathbf{u} \Vert=1$, we also have $\int_0^{2\pi} |\phi(x)|^2 \, dx =1$, which implies that
$$
\Vert C_R \Vert \le \sup_{x \in [0,2\pi]} |f(x)| =: M
$$
for any integer $R \ge 1$. Grenander and Szeg\"o proved in \cite[p.~72]{GZ84} the following refined statement of the convergence speed of the spectral norm. If $f$ is twice continuously differentiable, admits a unique maximum in $x_0$ and is such that $f''(x_0) \ne 0$, then
$$
M - \Vert C_R \Vert \sim f(x_0) - f \left(x_0 + \frac{\pi}{R} \right) \sim \frac{\pi^2 \, |f''(x_0)|}{2 R^2}
$$
as $R \to \infty$.

The above theorem does however not apply to Hilbert matrices of the form $T_R$, as the harmonic series $\sum_{r \ge 1} \frac{1}{r}$ 
diverges, so condition \eqref{eq:summable} is not satisfied. Correspondingly, the symbol associated to these matrices is the function
$$
f(x) = \sum_{r \ge 1} \frac{-\exp(irx)+\exp(-irx)}{r} = -2 i \sum_{r \ge 1} \frac{\sin(rx)}{r} = i (x-\pi), \quad x \in \, ]0,2\pi[
$$
while by Dirichlet's theorem, $f(0)=f(2\pi)=0$; $f$ is therefore discontinuous, but relation \eqref{eq:fourier} still holds in this case and allows to deduce Hilbert's inequality: 
$$
\Vert T_R \Vert \le \sup_{x \in [0,2\pi]} |f(x)| = \pi
$$
However, relation \eqref{eq:fourier} alone does not allow to conclude on the convergence speed towards $\pi$.

In general, the problem of evaluating the convergence speed of the spectral norm is a difficult one when $f$ attains its maximum at a point of discontinuity. An interesting matrix entering into this category has been the object of a detailed study by Slepian in \cite{S78} (see also \cite{V93} for a recent exposition of the problem\footnote{We would like to thank Ben Adcock for pointing out this interesting reference to us.}). It is the so-called {\em prolate matrix}, defined as
$$
(P_R)_{m,n} = p_{m-n}, \quad 1 \le m,n \le R, \quad \text{where } p_r = \begin{cases} \frac{\sin(2 \pi w r)}{r} & \text{if } r \ne 0\\ 2 \pi w & \text{if }r=0 \end{cases}
$$
where $0<w<\frac{1}{2}$ is a fixed parameter. Here again, we see that condition \eqref{eq:summable} is not satisfied. The symbol associated to this matrix is the function
$$
f_w(x) = \sum_{r \in \ZZ} p_r \, \exp(i r x) = 2 \pi w + 2 \sum_{r \ge 1} \frac{\sin(2 \pi w r)}{r} \, \cos(rx)
= \pi \, 1_{[0,2\pi w] \cup [2\pi(1-w),2\pi]}(x)
$$
for all $x \in [0,2\pi]\backslash\{2\pi w,2 \pi(1-w)\}$. In this case, we again have for any integer $R \ge 1$
$$
\Vert P_R \Vert < \sup_{x \in [0,2\pi]} |f_w(x)| = \pi \quad \text{and} \quad \lim_{R \to \infty} \Vert P_R \Vert = \pi
$$
It is moreover shown in \cite{S78} that for all $0 < \omega < \frac{1}{2}$, there exist $c_w, d_w>0$ (where both $c_w$ and $d_w$ are given explicitly in \cite{V93}) such that
$$
\pi - \Vert P_R \Vert \sim c_w \, \sqrt{R} \, \exp(- d_w R)
$$
We see here that even though the function $f_w$ is discontinuous, the convergence speed is exponential, as opposed to polynomial in the case of a smooth symbol. Of course, the situation here is quite particular, as the function $f_w$ has a plateau at its maximum value, which is not the case for the Hilbert matrix $T_R$.


\subsection{Generalized weighted Hilbert matrices}

Let $\mathbf{x}=(x_1,\ldots,x_R)$ be a vector of distinct real numbers, $\mathbf{c}=(c_1,\ldots,c_R)$ be another vector of real numbers, and let us define the matrices $A(\mathbf{x})$ and $B(\mathbf{x},\mathbf{c})$ of size $R \times R$ as
\begin{equation} \label{eq:ar}
a_{m,n}(\mathbf{x})=\begin{cases}
0 & \text{if } m=n\\
\dfrac{1}{x_m - x_n } & \text{if } m\ne n
\end{cases}
\end{equation}
and
\begin{equation} \label{eq:br}
b_{m,n}(\mathbf{x},\mathbf{c})=\begin{cases}
0 & \text{if } m=n\\
\dfrac{c_m \, c_n}{x_m-x_n} & \text{if }  m \ne n
\end{cases}
\end{equation}
If there is no risk of confusion, the matrices $A(\mathbf{x})$ and $B(\mathbf{x},\mathbf{c})$ will be denoted as $A$ and $B$, respectively.


\subsection{A result and a conjecture by Montgomery and Vaughan}

In order to motivate the study of generalized Hilbert matrices, let us mention here both a result and a conjecture made by Montgomery and Vaughan. The next section will be devoted to applications of these questions.

In \cite{MV73}, Montgomery and Vaughan showed the following result. If $A(\mathbf{x})$ is the $R \times R$ matrix with entries as in \eqref{eq:ar}, then
$$
\Vert A(\mathbf{x}) \Vert \le \frac{\pi}{\delta}
$$
where $\delta = \inf_{1 \le m,n \le R, m \ne n} |x_n-x_m|$.

They further showed that if $B(\mathbf{x},\mathbf{c})$ is the $R \times R$ matrix with entries as in \eqref{eq:br} with $c_n=\sqrt{\delta_n}$ and $\delta_n=\min_{1 \le m \le R, \, m \ne n} |x_m-x_n|$, then
\begin{equation} \label{eq:MV}
\Vert B(\mathbf{x},\mathbf{c}) \Vert \le \frac{3 \pi}{2}
\end{equation}
They also conjectured that the tightest upper bound is actually $\Vert B(\mathbf{x},\mathbf{c}) \Vert \le \pi$. In \cite{P84}, Montgomery and Vaughan's result was improved to $\Vert B(\mathbf{x},\mathbf{c}) \Vert \le \frac{4 \pi}{3}$, but the conjecture remains open so far.


\subsection{Applications}

\subsubsection{Large sieve inequalities}

Let $x_1,\ldots,x_R$ be real numbers which are distinct modulo $1$. Let also $\Vert t\Vert$ be the distance from the real number $t$ to the closest integer and let
$$
\delta:=\min_{r,s, \, r\ne s}\Vert x_r-x_s\Vert \quad \text{and} \quad \delta_r:=\min_{s, \, s\ne r}\Vert x_r-x_s \Vert
$$ 
For $(a_n)_{M+1\le n \le M+N}$  an arbitrary sequence of complex numbers, we write
$$
S(x):=\sum_{M+1\le n\le M+N} a_n \exp(2 \pi i nx)
$$
A large sieve inequality has the generic form
$$
\sum_{1 \le r \le R} |S(x_r)|^2 \le \Delta(N,\delta) \, \sum_{M+1 \le n\le M+N} |a_n|^2
$$
Using Hilbert's inequality \eqref{eq:Hilbert}, one can show that the previous inequality holds with $\Delta(N,\delta) = N + \delta^{-1} - 1$. Equivalently, this says that
$$
\text{if } B:= \left\{\exp(2\pi i n x_r) \right\}_{M+1 \le n \le M+N, \, 1 \le r \le R}, \quad \text{then }\Vert B\Vert^2 \le \Delta(N,\delta)
$$
Besides, generalized Hilbert inequalities of the type \eqref{eq:MV} are particularly useful when studying irregularly spaced $x_r$ (such as Farey sequences). These generalized inequalities allow to prove the following refined large sieve inequality:
$$
\sum_{1\le r\le R} \left(N+\frac{3}{2} \delta_r^{-1} \right)^{-1} \vert S(x_{r})\vert^2 \le \sum_{M+1 \le n\le M+N} |a_n|^2
$$
This last result is useful for arithmetic applications, as it allows e.g.~to show (see \cite{MV73}) that $\pi(M+N)-\pi(M)\le 2\pi(N)$ ($\pi(N)$ being the number of primes smaller than or equal to $N$), whereas the inequality $\pi(M+N)-\pi(M) \le \pi(N)$ stands as a conjecture so far.

Another important application of large sieve inequalities is the Bombieri-Vinogradov theorem (see for instance \cite{BFI86}), which is related to various conjectures on the distribution of primes.

\subsubsection{Other Hilbert inequalities}

In \cite{MV74}, Montgomery and Vaughan study variants of Hilbert's inequality (with for instance $\frac{1}{x_r-x_s}$ replaced by $\csc(x_r-x_s)$), which allow them to show the following result: if $\sum_{n \ge 1} n |a_n|^2 < \infty$, then
$$
\int_0^T \Big| \sum_{n \ge 1} a_n n^{-it} \Big|^2 \, dt = \sum_{n\ge1} | a_n|^2 \, (T+O(n))
$$
The key idea behind the proof of the main result in \cite{MV74} is the following identity:
$$
\csc(x_k-x_l) \, \csc(x_l-x_m) = \csc(x_k-x_m) \, (\cot(x_k-x_l) + \cot(x_l-x_m))
$$
which is of the same type as our relation \eqref{eq:a} below. Building on this, a further generalization of Hilbert's inequalities has been performed in \cite{P87}, where the following functional equations are solved:
$$
\frac{1}{\theta(x)\theta(y)}=\Psi(x)-\Psi(y)+\frac{\phi(x-y)}{\theta(x-y)} \quad \text{and} \quad \frac{1}{\theta(x)\theta(y)}=\frac{\sigma(x)-\sigma(y)}{\theta(x-y)}+\tau(x)\tau(y) \quad \text{(with }\tau(0)=0)
$$


\section{New results} \label{sec:new}

 
\subsection{Spectral norm of $B(\mathbf{x},\mathbf{c})$} \label{sec:norm}
We establish below a monotonicity result regarding the spectral norm of matrices of the type $B(\mathbf{x},\mathbf{c})$.

\begin{thm} \label{thm:maj}
If $\mathbf{x},\mathbf{x}',\mathbf{c},\mathbf{c'}$ are vectors of real numbers such that 
$$
\vert b_{m,n}(\mathbf{x},\mathbf{c})\vert \le \vert b_{m,n}(\mathbf{x'},\mathbf{c'})\vert \quad \text{for } 1 \le m,n \le R
$$
then 
\begin{equation} \label{eq:maj}
\Vert B(\mathbf{x},\mathbf{c}) \Vert \le \Vert B(\mathbf{x'},\mathbf{c'}) \Vert
\end{equation}
\end{thm}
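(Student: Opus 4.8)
The plan is to reduce the spectral‑norm comparison to a family of scalar inequalities between spectral moments, and to obtain each of those by exhibiting the moment as a polynomial with \emph{nonnegative} coefficients in the squared entries $b_{m,n}^2$. First, note that $B:=B(\mathbf{x},\mathbf{c})$ is real and skew‑symmetric (indeed $b_{n,m}=-b_{m,n}$), hence normal, so $\Vert B\Vert$ equals its spectral radius and its nonzero eigenvalues come in conjugate pairs $\pm i\lambda_j$ with $\lambda_j>0$. Consequently, for every $k\ge 1$,
$$
(-1)^k\,\mathrm{tr}\bigl(B^{2k}\bigr)=\mathrm{tr}\bigl((B^{\!\top}B)^k\bigr)=2\sum_j\lambda_j^{2k},
$$
a quantity sandwiched between $\lambda_{\max}^{2k}$ and $R\,\lambda_{\max}^{2k}$, so that $\Vert B(\mathbf{x},\mathbf{c})\Vert=\lim_{k\to\infty}\bigl((-1)^k\,\mathrm{tr}(B(\mathbf{x},\mathbf{c})^{2k})\bigr)^{1/2k}$. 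It therefore suffices to prove that for each fixed $k\ge 1$ the quantity $(-1)^k\,\mathrm{tr}\bigl(B(\mathbf{x},\mathbf{c})^{2k}\bigr)$ is a non‑decreasing function of the numbers $|b_{m,n}(\mathbf{x},\mathbf{c})|$; the hypothesis then yields the inequality for every $k$, and passing to the limit gives \eqref{eq:maj}.

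To establish this, expand $(-1)^k\,\mathrm{tr}(B^{2k})=\mathrm{tr}\bigl((-B^2)^k\bigr)$ as a sum over closed walks of length $2k$ in the complete graph on $\{1,\dots,R\}$, each walk weighted by the product of the entries $b_{\cdot,\cdot}$ along its successive edges, and group the walks by the multiset of edges used. Every walk that traverses each edge an even number of times contributes a monomial in the $b_{m,n}^2$ with a positive integer coefficient — precisely the kind of term that is monotone in the $|b_{m,n}|$ — so the whole point is to show that the combined contribution of all the remaining walks, those using some edge an odd number of times, vanishes. This is where the structure of Hilbert‑type entries enters, through the triangle identity
$$
\frac{1}{(x_k-x_l)(x_l-x_m)}+\frac{1}{(x_l-x_m)(x_m-x_k)}+\frac{1}{(x_m-x_k)(x_k-x_l)}=0
$$
for distinct $x_k,x_l,x_m$ (this is relation \eqref{eq:a}, and it is equivalent to the trivial polynomial identity $(x_1-x_2)(x_3-x_4)-(x_1-x_3)(x_2-x_4)+(x_1-x_4)(x_2-x_3)=0$); after conjugation by $\mathrm{diag}(c_1,\dots,c_R)$ it says that the three ``cycle weights'' attached to any four vertices sum to zero. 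One uses it to contract a length‑two sub‑path $m_{i-1}\to m_i\to m_{i+1}$ of a walk by rewriting $b_{m_{i-1},m_i}\,b_{m_i,m_{i+1}}$ as a combination of weights of shorter walks, and argues inductively that the total weight of all closed walks carrying a prescribed odd edge‑multiset is zero.

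I would carry this out explicitly for $k=2$ first, where the only offending walks are honest $4$‑cycles, and the three of them attached to each $4$‑element vertex set cancel by the above identity, leaving
$$
\mathrm{tr}\bigl((-B^2)^2\bigr)=\sum_m\Bigl(\sum_\ell b_{\ell,m}^2\Bigr)^{\!2}+\sum_{m\ne n}\sum_\ell b_{\ell,m}^2\,b_{\ell,n}^2,
$$
and then set up the induction on $k$ (or on walk length). The main obstacle is precisely this last step: for large $k$ the bad walks may revisit vertices and an intermediate vertex of one factor may coincide with an endpoint of another, so one must track signs and multiplicities carefully to guarantee that each application of \eqref{eq:a} preserves the cancellation structure and that no negative contribution survives. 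Once the nonnegativity of the coefficients is in hand, the conclusion is immediate: $|b_{m,n}(\mathbf{x},\mathbf{c})|\le|b_{m,n}(\mathbf{x}',\mathbf{c}')|$ forces $b_{m,n}(\mathbf{x},\mathbf{c})^2\le b_{m,n}(\mathbf{x}',\mathbf{c}')^2$, hence $(-1)^k\,\mathrm{tr}\bigl(B(\mathbf{x},\mathbf{c})^{2k}\bigr)\le(-1)^k\,\mathrm{tr}\bigl(B(\mathbf{x}',\mathbf{c}')^{2k}\bigr)$ for all $k$, and taking $2k$‑th roots and letting $k\to\infty$ gives $\Vert B(\mathbf{x},\mathbf{c})\Vert\le\Vert B(\mathbf{x}',\mathbf{c}')\Vert$.
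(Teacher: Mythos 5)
Your proposal follows the same overall strategy as the paper: since $B$ is real skew-symmetric, $\Vert B\Vert=\lim_{k\to\infty}\bigl((-1)^k\,\mathrm{Tr}(B^{2k})\bigr)^{1/2k}$, so it suffices to show that each $(-1)^k\,\mathrm{Tr}(B^{2k})$ is a polynomial in the $b_{m,n}^2$ with nonnegative coefficients, the cancellations being driven by the identity \eqref{eq:a}; entrywise monotonicity and the limit then give \eqref{eq:maj}. However, the heart of the matter --- proving for \emph{every} $k$ that the contributions of closed walks traversing some edge an odd number of times cancel --- is exactly what you leave open: you verify $k=2$ and then concede that for general $k$ ``one must track signs and multiplicities carefully,'' calling this the main obstacle. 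That is a genuine gap, not a routine verification. A global grouping of walks by edge multisets is delicate precisely because each application of \eqref{eq:a} replaces a length-two sub-path by terms whose edge multisets differ, so the parity classes you are trying to separate get mixed, and it is not clear how to organize the rewriting so that no negative contribution survives.

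The paper avoids this global bookkeeping by working with a single diagonal entry and a first-return decomposition. Splitting each closed walk at $n$ according to its first return to $n$ gives
\begin{equation*}
(B^k)_{n,n}=\sum_{0\le r\le k-2}\;\sum_{\substack{1\le l,m\le R\\ l\ne n,\,m\ne n}} b_{n,l}\,\bigl(B_{-n}^r\bigr)_{l,m}\,b_{m,n}\,\bigl(B^{k-r-2}\bigr)_{n,n},
\end{equation*}
where $B_{-n}$ is $B$ with the $n$-th row and column removed, and a separate lemma (a single application of \eqref{eq:a} plus antisymmetry, summed against $(B_{-n}^r)_{l,m}$) shows that the cross terms with $l\ne m$ vanish: $\sum_{l\ne m} b_{n,l}\,b_{m,n}\,(B_{-n}^r)_{l,m}=0$. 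Only $l=m$ survives, yielding the clean recursion $(B^k)_{n,n}=-\sum_{r}\sum_{l} b_{n,l}^2\,(B_{-n}^r)_{l,l}\,(B^{k-r-2})_{n,n}$; since $B_{-n}$ is again a matrix of the same type, induction immediately gives that $(-1)^{k}(B^{2k})_{n,n}$ is a polynomial in the $b_{l,m}^2$ with positive coefficients, which is what your argument needs. So the missing idea is to localize the cancellation at the pivot vertex $n$ (first-return decomposition plus the vanishing of the off-diagonal cross terms), rather than attempting a global cancellation over edge multisets of the full trace.
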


\begin{rem}
Notice for matrices $Y, Z$ with positive entries, it holds that if $0 \le y_{m,n} \le z_{m,n}$ for all $m,n$, then $\Vert Y \Vert \le \Vert Z \Vert$. Consider indeed the normalized eigenvector $u$ corresponding to the largest eigenvalue of $Y^*Y$: as $Y^*Y$ has positive entries, $u$ is also positive, so $\Vert Y \Vert = \Vert Y u \Vert \le \Vert Z u \Vert \le \Vert Z \Vert$. The above result states that a similar result holds for matrices of the form $B(\mathbf{x},\mathbf{c})$, even though these do not have positive entries.
\end{rem}

The remainder of the present section is devoted to the proof of Theorem \ref{thm:maj}, which we decompose into a sequence of lemmas.

First of all, let us observe that the numbers $a_{m,n}=1/(x_m-x_n)$ satisfy
\begin{equation} \label{eq:a}
a_{k,l} \, a_{l,m} = a_{k,m} \, (a_{k,l}+a_{l,m}) \quad \text{for } k,l,m \text{ distinct}
\end{equation}
This relation will be of primary importance in the sequel.

\begin{lem} \label{lem:1}
If $k$ is a positive integer and $1 \le n \le R$, then, denoting as $B_{-n}$ the matrix $B$
with $n^{th}$ row and column removed, we obtain
\begin{equation} \label{eq:s}
S := \sum_{\substack{1 \le l,m \le R \\ l \ne n, \, m \ne n, \, l \ne m}} b_{n,l} \, b_{m,n} \, (B_{-n}^k)_{l,m}=0
\end{equation}
\end{lem}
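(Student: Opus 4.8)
The plan is to collapse the double sum defining $S$ onto the diagonal of $B_{-n}^{k+1}$, by applying the three-term relation \eqref{eq:a} to the two entries that are adjacent to the deleted index $n$. The starting point is the following consequence of \eqref{eq:a}: whenever $l$ and $m$ are distinct indices, both different from $n$,
$$
b_{n,l}\,b_{m,n}=c_n^2\,b_{m,l}\,(a_{m,n}+a_{n,l}).
$$
Indeed $b_{n,l}b_{m,n}=c_n^2\,c_l c_m\,a_{m,n}a_{n,l}$, and \eqref{eq:a} applied to the distinct indices $m,n,l$ gives $a_{m,n}a_{n,l}=a_{m,l}(a_{m,n}+a_{n,l})$, so the claim follows after recognising $c_l c_m a_{m,l}=b_{m,l}$; equivalently, this is nothing but the partial fraction identity $\frac{1}{(x_m-x_n)(x_n-x_l)}=\frac{1}{x_m-x_l}\left(\frac{1}{x_m-x_n}+\frac{1}{x_n-x_l}\right)$. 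The hypothesis $l\neq m$ is genuinely needed here, which is precisely why that case is excluded from $S$.

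Substituting this identity into the definition of $S$, and using that $b_{m,l}=(B_{-n})_{m,l}$ for $l,m\neq n$, I would get
$$
S=c_n^2\sum_{\substack{1\le l,m\le R\\ l\neq n,\,m\neq n,\,l\neq m}}(a_{m,n}+a_{n,l})\,(B_{-n})_{m,l}\,(B_{-n}^{k})_{l,m}.
$$
Now I would split the factor $a_{m,n}+a_{n,l}$ into its two summands. In the part carrying $a_{m,n}$, summing over $l$ with $m$ held fixed turns the inner sum into a diagonal entry of $B_{-n}\cdot B_{-n}^{k}=B_{-n}^{k+1}$; in the part carrying $a_{n,l}$, summing over $m$ with $l$ held fixed does the same, via $B_{-n}^{k}\cdot B_{-n}=B_{-n}^{k+1}$. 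The one point to watch is that these inner sums omit the index equal to the fixed one (because of the constraint $l\neq m$), but the omitted summand vanishes since $B_{-n}$ has zero diagonal, so nothing is lost.

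Hence
$$
S=c_n^2\left(\sum_{m\neq n}a_{m,n}\,(B_{-n}^{k+1})_{m,m}+\sum_{l\neq n}a_{n,l}\,(B_{-n}^{k+1})_{l,l}\right),
$$
and after renaming $l$ into $m$ in the second sum this becomes $c_n^2\sum_{m\neq n}(a_{m,n}+a_{n,m})\,(B_{-n}^{k+1})_{m,m}=0$, because $a_{m,n}+a_{n,m}=\frac{1}{x_m-x_n}+\frac{1}{x_n-x_m}=0$. The only genuine difficulty is the opening move: seeing that \eqref{eq:a} should be applied to the pair $b_{n,l}\,b_{m,n}$ of entries flanking the removed index, since this replaces the ``detour through $n$'' by a direct step from $m$ to $l$ and lets the remaining factor $(B_{-n}^{k})_{l,m}$ close up into the diagonal of $B_{-n}^{k+1}$. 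Everything afterwards is routine bookkeeping with the vanishing diagonal of $B_{-n}$, and — pleasantly — the argument is uniform in $k$, with no parity distinction needed.
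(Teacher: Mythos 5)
Your proof is correct and follows essentially the same route as the paper: apply \eqref{eq:a} to replace $a_{m,n}a_{n,l}$ by $a_{m,l}(a_{m,n}+a_{n,l})$, recognise $b_{m,l}=(B_{-n})_{m,l}$, collapse each half of the sum onto the diagonal of $B_{-n}^{k+1}$ (with the harmless omission of $l=m$ handled by the zero diagonal), and conclude by the antisymmetry $a_{m,n}+a_{n,m}=0$. The paper phrases this as $S=S_1+S_2$ with $S_2=-S_1$, which is exactly your final cancellation.
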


\begin{proof}
Using \eqref{eq:a}, we obtain
$$
S = \sum_{\substack{1 \le l,m \le R \\ l \ne n, \, m \ne n, l \ne m}} c_l \, c_m \, c_n^2 \, a_{m,n} \, a_{n,l} \, (B_{-n}^k)_{l,m}
= \sum_{\substack{1 \le l,m \le R \\ l \ne n, \, m \ne n, \, l \ne m}} c_l \, c_m \, c_n^2 \, a_{m,l} \, (a_{m,n} + a_{n,l}) \, (B_{-n}^k)_{l,m}= S_1 + S_2
$$
with
$$
S_1 = \sum_{\substack{1 \le l,m \le R \\ l \ne n, \, m \ne n, \, l \ne m}} c_l \, c_m \, c_n^2 \, a_{m,l} \, a_{m,n} \, (B_{-n}^k)_{l,m}
= \sum_{\substack{1 \le l,m \le R \\ l \ne n, \, m \ne n, \, l \ne m}} c_n^2 b_{m,l} \, a_{m,n} \, (B_{-n}^k)_{l,m} = \sum_{\substack{1 \le m \le R\\ m \ne n}} c_n^2 \, a_{m,n} \, (B_{-n}^{k+1})_{m,m}
$$
and
$$
S_2 =  \sum_{\substack{1 \le l,m \le R \\ l \ne n, \, m \ne n, \, l \ne m}} c_l \, c_m \, c_n^2 \, a_{m,l} \, a_{n,l} \, (B_{-n}^k)_{l,m} = \sum_{\substack{1 \le l \le R\\ l \ne n}} c_n^2 \, a_{n,l} \, (B_{-n}^{k+1})_{l,l} = - \sum_{\substack{1 \le l \le R \\ l \ne n}} c_n^2 \, a_{l,n} \, (B_{-n}^{k+1})_{l,l}=-S_1
$$
as $A$ is antisymmetric.
\end{proof}

\begin{lem} \label{lem:2}
Let $1 \le n \le R$ and $k\ge 2$ be an integer. Then
$$
(B^k)_{n,n} = \sum_{0 \le r \le k-2} \; \sum_{\substack{1 \le l,m \le R\\ l \ne n, \, m \ne n}} b_{n,l} \, \left(B_{-n}^r \right)_{l,m} \, b_{m,n} \, \left(B^{k-r-2} \right)_{n,n}
= -\sum_{0 \le r \le k-2} \; \sum_{1 \le l \le R} \, b_{n,l}^2 \, (B_{-n}^r)_{l,l} \, (B^{k-r-2})_{n,n}
$$
\end{lem}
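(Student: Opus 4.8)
The plan is to establish the two displayed identities in turn. The first one is the \emph{first-return decomposition} of the diagonal entries of $B^k$ at the index $n$ (equivalently, a Schur-complement identity for the resolvent $(I-zB)^{-1}$), and the second one then follows from the first by splitting the inner double sum into its diagonal and off-diagonal parts and invoking Lemma~\ref{lem:1}.

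For the first identity I would start from the expansion
$$
(B^k)_{n,n} = \sum_{1 \le i_1,\ldots,i_{k-1} \le R} b_{n,i_1} \, b_{i_1,i_2} \cdots b_{i_{k-2},i_{k-1}} \, b_{i_{k-1},n}.
$$
Since $b_{n,n}=0$, only terms with $i_1 \ne n$ and $i_{k-1} \ne n$ survive, so the first-return index $t := \min\{\, j \ge 1 : i_j = n \,\}$ is well defined and satisfies $2 \le t \le k$. Grouping the terms according to the value of $t$, and setting $r := t-2$, $l := i_1$, $m := i_{t-1}$, one sees that the indices $i_1,\ldots,i_{t-1}$ range over $\{1,\ldots,R\}\setminus\{n\}$ and that summing them out produces the factor $(B_{-n}^r)_{l,m}$, while summing out the remaining indices $i_{t+1},\ldots,i_{k-1}$ (with $i_t = n = i_k$) produces the factor $(B^{k-r-2})_{n,n}$. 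Summing over $r \in \{0,\ldots,k-2\}$ and over $l,m \ne n$ then gives
$$
(B^k)_{n,n} = \sum_{0 \le r \le k-2} \; \sum_{\substack{1 \le l,m \le R\\ l \ne n,\, m \ne n}} b_{n,l} \, (B_{-n}^r)_{l,m} \, b_{m,n} \, (B^{k-r-2})_{n,n}.
$$
The boundary cases are harmless: for $r=0$ one has $l=m$ and $(B_{-n}^0)_{l,m}=\delta_{l,m}$, for $r=k-2$ one has $(B^0)_{n,n}=1$, and the hypothesis $k\ge 2$ is exactly what makes the range of $r$ non-empty. (Alternatively, the same identity drops out of the Schur-complement relation $[(I-zB)^{-1}]_{n,n} = 1 + \bigl(z^2 \sum_{r \ge 0} z^r \, \rho \, B_{-n}^r \, \gamma\bigr)\,[(I-zB)^{-1}]_{n,n}$, where $\rho$ and $\gamma$ denote, respectively, the $n$-th row and the $n$-th column of $B$ with the vanishing entry $b_{n,n}$ removed, upon extracting the coefficient of $z^k$.)

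For the second identity I would fix $r$ and split the inner sum $\sum_{l,m \ne n} b_{n,l} \, (B_{-n}^r)_{l,m} \, b_{m,n}$ into the part with $l \ne m$ and the part with $l = m$. The part with $l\ne m$ is precisely the quantity $S$ of Lemma~\ref{lem:1} with $k$ there replaced by $r$, hence it equals $0$ for every $r \ge 1$; for $r=0$ it is an empty sum, hence also $0$. The part with $l = m$ is $\sum_{l \ne n} b_{n,l} \, (B_{-n}^r)_{l,l} \, b_{l,n}$, and since $b_{l,n} = c_l c_n/(x_l - x_n) = -b_{n,l}$, this equals $-\sum_{l \ne n} b_{n,l}^2 \, (B_{-n}^r)_{l,l}$; the index $l=n$ may be re-included for free because $b_{n,n}=0$. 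Substituting this into the first identity and summing over $r$ yields
$$
(B^k)_{n,n} = -\sum_{0 \le r \le k-2} \; \sum_{1 \le l \le R} b_{n,l}^2 \, (B_{-n}^r)_{l,l} \, (B^{k-r-2})_{n,n},
$$
which is the second claimed identity.

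I do not expect a genuine obstacle here: the only thing requiring a little care is the bookkeeping at the two ends of the range of $r$ together with the reduction of the off-diagonal sum to Lemma~\ref{lem:1}; the rest is a direct expansion and the elementary relation $b_{l,n}=-b_{n,l}$.
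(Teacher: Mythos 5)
Your argument is correct and follows essentially the same route as the paper: the first equality is obtained by the same first-return decomposition of closed paths at the index $n$ (the paper's parameter $s=r+2$), and the second by splitting the inner sum into its $l=m$ and $l\ne m$ parts, killing the latter via Lemma~\ref{lem:1} and using the antisymmetry $b_{l,n}=-b_{n,l}$ for the former. Your explicit handling of the boundary cases $r=0$ and $r=k-2$ only spells out what the paper leaves implicit.
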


\begin{proof}
Notice first that
$$
(B^k)_{n,n} = \sum_{1\le n_1,\ldots,n_{k-1} \le R} b_{n,n_1} \, b_{n_1,n_2} \cdots b_{n_{k-2},n_{k-1}} \, b_{n_{k-1},n}
$$
As $b_{n,n}=0$, we may consider $n_1, n_{k-1} \ne n$ in the above sum. For each $(n_1,\ldots,n_{k-1})$, define
$$
s = \inf\{t \in \{2,\ldots,k\} \; |  \, n_1 \ne n,\ldots,n_{t-1} \ne  n, n_t=n\}
$$
(where by convention, we set $n_k=n$ in the above definition). Ordering the terms in the above sum according to the value of $s$, we obtain
\begin{eqnarray*}
(B^k)_{n,n} & = & \sum_{2 \le s \le k} \; \sum_{n_1, \, n_{s-1} \ne n} b_{n,n_1} \, \left(B^{s-2}_{-n} \right)_{n_1,n_{s-1}} \, b_{n_{s-1},n}
\, \left(B^{k-s} \right)_{n,n}\\
& = & \sum_{0 \le r \le k-2} \; \sum_{n_1, \, n_{r+1} \ne n} b_{n,n_1} \, \left(B^r_{-n} \right)_{n_1,n_{r+1}} \, b_{n_{r+1},n}
\, \left(B^{k-r-2} \right)_{n,n}
\end{eqnarray*}
which is the first equality in the lemma. The second one follows from \eqref{eq:s} and the fact that $B$ is antisymmetric.
\end{proof}

\begin{lem} \label{lem:3}
Let $1\le n\le R$ and $k\ge 2$ be an integer. Then the following holds

\begin{tabular}{l}
- if $k$ is odd, then $(B^k)_{n,n}=0$\\
- if $k$ is even, then $(-1)^{\frac{k}{2}} \, (B^k)_{n,n}$ is a polynomial in $(b_{l,m}^2, \; 1 \le l < m \le R)$ with positive coefficients.
\end{tabular}
\end{lem}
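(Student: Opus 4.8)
The plan is to prove the statement by strong induction on $k$, where the claim at stage $k$ is understood to hold for \emph{all} generalized weighted Hilbert matrices $B(\mathbf{x},\mathbf{c})$ (of arbitrary size $R\ge 1$) and all diagonal indices $n$. This uniform quantification is what makes the induction close, since the recursion of Lemma \ref{lem:2} expresses $(B^k)_{n,n}$ through strictly lower powers of both $B$ and the principal submatrix $B_{-n}$, and $B_{-n}$ is again a matrix of the form $B(\mathbf{x}',\mathbf{c}')$, obtained by deleting $x_n$ from $\mathbf{x}$ and $c_n$ from $\mathbf{c}$. For the base case $k=2$ one computes directly $(B^2)_{n,n}=\sum_m b_{n,m}\,b_{m,n}=-\sum_{m\ne n} b_{n,m}^2$, so that $(-1)^1(B^2)_{n,n}=\sum_{m\ne n} b_{n,m}^2$ is a polynomial in the $b_{l,m}^2$ with positive (here, unit) coefficients. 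It is also convenient to record the trivial identities $(M^0)_{l,l}=1$ and $(M^1)_{l,l}=0$ valid for any matrix $M$ with zero diagonal, to cover the cases where the exponents $r$ or $k-r-2$ arising in Lemma \ref{lem:2} take the values $0$ or $1$.

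For the inductive step, fix $k\ge 3$ and start from the second expression in Lemma \ref{lem:2},
\[
(B^k)_{n,n}=-\sum_{0\le r\le k-2}\ \sum_{1\le l\le R} b_{n,l}^2\,(B_{-n}^r)_{l,l}\,(B^{k-r-2})_{n,n},
\]
splitting the sum over $r$ according to the parity of $r$. Since $r+(k-r-2)=k-2$, in each term the parities of $r$ and of $k-r-2$ are related by that of $k-2$. If $r$ is odd then $(B_{-n}^r)_{l,l}=0$ (trivially when $r=1$, and by the induction hypothesis applied to $B_{-n}$ when $r\ge 3$), and symmetrically $(B^{k-r-2})_{n,n}=0$ whenever $k-r-2$ is odd. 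Consequently, if $k$ is odd then $k-2$ is odd, so for every $r$ at least one of $r$, $k-r-2$ is odd, every term vanishes, and $(B^k)_{n,n}=0$, as claimed.

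If $k$ is even, only the terms with $r$ even (hence $k-r-2$ even as well) survive. For each such $r$ write $(B_{-n}^r)_{l,l}=(-1)^{r/2}P_{r,l}$, where $P_{r,l}=1$ if $r=0$ and otherwise $P_{r,l}$ is the positive-coefficient polynomial in the squared entries furnished by the induction hypothesis applied to $B_{-n}$; likewise write $(B^{k-r-2})_{n,n}=(-1)^{(k-r-2)/2}Q_{k-r-2}$, with $Q_0=1$ and $Q_j$ a positive-coefficient polynomial for even $j\ge 2$. Multiplying the two factors, the overall sign of the $r$-th term (including the leading minus sign of the recursion) is $-(-1)^{(r+(k-r-2))/2}=-(-1)^{(k-2)/2}=(-1)^{k/2}$, whence
\[
(-1)^{k/2}(B^k)_{n,n}=\sum_{\substack{0\le r\le k-2\\ r\ \text{even}}}\ \sum_{1\le l\le R} b_{n,l}^2\,P_{r,l}\,Q_{k-r-2}.
\]
Using $b_{n,n}=0$ and $b_{n,l}^2=b_{\min(n,l),\max(n,l)}^2$, the right-hand side is a polynomial in $(b_{l,m}^2,\ 1\le l<m\le R)$ with positive coefficients, which completes the induction.

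The only genuine work here is the parity bookkeeping together with the careful handling of the degenerate exponents $0$ and $1$ (and of small matrix sizes, where a relevant polynomial may be the empty, i.e.\ zero, polynomial); once the sign of a generic surviving term has been pinned down to $(-1)^{k/2}$, the conclusion is immediate. I expect the only point needing real care to be checking that the induction hypothesis genuinely applies to $B_{-n}$ — that deleting a row and column from $B(\mathbf{x},\mathbf{c})$ returns a matrix of exactly the same form — and that the exponents $r$ and $k-r-2$ in Lemma \ref{lem:2} are always strictly below $k$, so the induction is well-founded.
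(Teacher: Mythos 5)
Your proof is correct and follows essentially the same route as the paper: the even case is exactly the paper's induction on the recursion of Lemma \ref{lem:2}, with your parity and sign bookkeeping spelled out, and your observation that $B_{-n}$ is again of the form $B(\mathbf{x}',\mathbf{c}')$ is precisely what makes that induction legitimate. The only (harmless) difference is that the paper dispatches the odd-$k$ case in one line — $B^k$ is itself antisymmetric for odd $k$, since $(B^k)^T=(-B)^k=-B^k$, so its diagonal vanishes — whereas you rederive it through the recursion.
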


\begin{proof}
Since $B$ is antisymmetric, the first statement is obvious. The second one follows by induction from Lemma \ref{lem:2}.
\end{proof}

{\em Proof of Theorem \ref{thm:maj}.}
Observe that since the matrix $iB$ is Hermitian, it has $R$ real eigenvalues $\mu_1,\ldots,\mu_R$ corresponding to an orthonormal basis of eigenvectors, so
$$
\Vert B \Vert = \max_{1 \le r \le R} |\mu_r|
$$
and for a positive integer $k$
$$
\mathrm{Tr}(B^{2k})=\sum_{1\le r\le R} (-1)^k \mu_r^{2k}
$$
Therefore, we obtain
$$
\left\Vert B\right\Vert = \lim_{k\to\infty} \left( (-1)^k \,  \mathrm{Tr}(B^{2k}) \right)^{\frac{1}{2k}} 
$$
and the theorem follows from Lemma \ref{lem:3}. \hfill $\square$


\subsection{Determinant of $B(\mathbf{x},\mathbf{c})$} \label{sec:det}

The following result shows that the determinant of $B(\mathbf{x},\mathbf{c})$ is a polynomial in $b_{l,m}^2$.

\begin{thm} \label{thm:det}
-If $R$ is odd, then $\det(B(\mathbf{x},\mathbf{c}))=0$.\\
-If $R=2T$ is even, then
\begin{equation} \label{eq:det}
\det(B(\mathbf{x},\mathbf{c}))=\prod_{k=1}^R c_k^2 \sum_{(m_i,n_i)_1^T \in E} \; \prod_{i=1}^T a_{m_i,n_i}^2 = \sum_{(m_i,n_i)_1^T \in E} \; \prod_{i=1}^T b_{m_i,n_i}^2
\end{equation}
where
$$
E:=\{(m_i,n_i)_1^T \; \vert \; \cup_{i=1}^T\{m_i,n_i\}=\{1,\ldots,R\} \; \mathrm{and} \; m_i<n_i, \;\forall i,\; m_1 < \ldots < m_T\}
$$
\end{thm}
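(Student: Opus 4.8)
The plan is to expand $\det(B)$ via the Leibniz formula $\det(B) = \sum_{\sigma \in S_R} \mathrm{sgn}(\sigma) \prod_{k=1}^R b_{k,\sigma(k)}$ and exploit the antisymmetry $b_{m,n} = -b_{n,m}$ together with the vanishing of the diagonal $b_{n,n}=0$. Since $B$ is antisymmetric, the odd-$R$ case is immediate: $\det(B) = \det(B^T) = \det(-B) = (-1)^R \det(B) = -\det(B)$, so $\det(B)=0$. For the even case $R=2T$, the key observation is that since $b_{n,n}=0$, only fixed-point-free permutations $\sigma$ contribute. I would then group the permutations according to their cycle structure and show that the contributions of all permutations containing a cycle of length $\ge 3$ cancel in pairs, leaving only the involutions (products of $T$ disjoint transpositions), which is exactly what the index set $E$ parametrizes. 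The normalization by $\prod_k c_k^2$ and the identification $b_{m,n}^2 = c_m^2 c_n^2 a_{m,n}^2$ then give both forms of the right-hand side.

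The cancellation mechanism is the heart of the matter, and it is the same phenomenon already used in Lemma \ref{lem:3}: a cycle $(i_1\, i_2\, \cdots\, i_\ell)$ of length $\ell \ge 3$ contributes, up to sign, a product $b_{i_1,i_2} b_{i_2,i_3} \cdots b_{i_\ell, i_1}$, and the reversed cycle $(i_1\, i_\ell\, \cdots\, i_2)$ contributes the product of the transposed entries, $b_{i_2,i_1} b_{i_3, i_2}\cdots b_{i_1,i_\ell} = (-1)^\ell b_{i_1,i_2}\cdots b_{i_\ell,i_1}$. When $\ell$ is odd these two contributions are negatives of each other (and both permutations have the same sign, since reversing a cycle is conjugation by an involution... more care is needed here), so they cancel. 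When $\ell$ is even the naive reversal argument fails, so one needs a more robust pairing: I would pair a permutation $\sigma$ having at least one non-transposition cycle with the permutation obtained by reversing its \emph{longest} such cycle (breaking ties, say, by smallest minimal element), check this is a fixed-point-free involution on the relevant set of permutations, and verify that the two Leibniz terms are opposite. The sign bookkeeping — that $\mathrm{sgn}(\sigma)$ times the monomial changes sign under this operation — is where I expect to spend the most effort: reversing an $\ell$-cycle multiplies the monomial by $(-1)^\ell$ via antisymmetry, and one must confront how $\mathrm{sgn}$ behaves, ultimately showing the net effect is a sign flip precisely when the cycle is not a transposition. (Equivalently, one can argue more cleanly that $\det(B) = \mathrm{Pf}(B)^2$ when $B$ is antisymmetric of even order, and then expand the Pfaffian, whose defining sum ranges over perfect matchings — i.e., over $E$ — directly; but since the paper works with traces and elementary symmetric-function arguments elsewhere, I would present the Leibniz-cancellation version for self-containedness, or at least remark on the Pfaffian route.)

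Once the surviving terms are identified as those $\sigma$ which are products of $T$ disjoint transpositions, each such $\sigma$ corresponds to a partition of $\{1,\ldots,R\}$ into $T$ unordered pairs $\{m_i,n_i\}$; writing the pairs with $m_i < n_i$ and ordering them by $m_1 < \cdots < m_T$ gives exactly one representative per matching, which is the set $E$. For such a $\sigma$, $\mathrm{sgn}(\sigma) = (-1)^T$ since each transposition is odd, and $\prod_k b_{k,\sigma(k)} = \prod_{i=1}^T b_{m_i,n_i} b_{n_i,m_i} = \prod_{i=1}^T (-b_{m_i,n_i}^2) = (-1)^T \prod_{i=1}^T b_{m_i,n_i}^2$. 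The two signs $(-1)^T$ cancel, yielding $\det(B) = \sum_{(m_i,n_i)_1^T \in E} \prod_{i=1}^T b_{m_i,n_i}^2$, and substituting $b_{m,n}^2 = c_m^2 c_n^2 a_{m,n}^2$ and factoring out $\prod_{k=1}^R c_k^2$ (each index $k$ appears in exactly one pair) gives the middle expression in \eqref{eq:det}. The main obstacle, to reiterate, is making the pairing-and-cancellation argument for even-length cycles fully rigorous with correct sign tracking; everything else is bookkeeping.
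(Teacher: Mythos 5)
Your reduction to fixed-point-free permutations, your treatment of odd $R$, and your final bookkeeping for the surviving involutions (sign $(-1)^T$ against $\prod_i b_{m_i,n_i}b_{n_i,m_i}=(-1)^T\prod_i b_{m_i,n_i}^2$) are all correct. The genuine gap is exactly where you anticipated trouble, and it is not a matter of sign bookkeeping: no pairing of permutations based on antisymmetry alone can cancel the cycles of even length $\ge 4$, because the identity you are trying to prove is false for a general antisymmetric matrix with zero diagonal. Reversing an even cycle multiplies the monomial by $(-1)^\ell=+1$ and leaves $\mathrm{sgn}(\sigma)$ unchanged, so the two Leibniz terms are equal, not opposite; and more fundamentally, for a generic antisymmetric $B$ one has $\det(B)=\mathrm{Pf}(B)^2$, whose cross terms do not vanish. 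Concretely, for $R=4$ with all $b_{m,n}=1$ ($m<n$) the determinant is $(1-1+1)^2=1$, while $\sum_{E}\prod_i b_{m_i,n_i}^2=3$. For the same reason your parenthetical Pfaffian shortcut does not work either: expanding $\mathrm{Pf}(B)^2$ gives a signed double sum over pairs of matchings, and the cross terms are precisely what must be shown to cancel.

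What is missing is the specific algebraic structure of the entries $a_{m,n}=1/(x_m-x_n)$, namely relation \eqref{eq:a}, $a_{k,l}\,a_{l,m}=a_{k,m}(a_{k,l}+a_{l,m})$. The paper's proof expands $\det(B)$ by Leibniz as you do, groups permutations according to the supports $F_1,\ldots,F_k$ of their cycles, and observes that the total contribution of a fixed support set factors through sums of the form $S(F_i)=\sum a_{s_1,s_2}a_{s_2,s_3}\cdots a_{s_{n_i},s_1}$ over all orderings of $F_i$; Lemma \ref{lem:4} then shows, using \eqref{eq:a} together with antisymmetry, that this sum vanishes whenever $|F_i|\ge 3$ (the cancellation happens inside the sum over all cyclic arrangements of a fixed support, not between a permutation and a single partner). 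Thus cycles of length $\ge 3$ — even as well as odd — die for this structural reason, and only the perfect matchings parametrized by $E$ survive. To repair your argument you would need to inject \eqref{eq:a} (or an equivalent of Lemma \ref{lem:4}) at the cancellation step; without it the approach cannot be completed.
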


Let us first establish the following lemma.

\begin{lem} \label{lem:4}
Let $l$ be an integer, with $3 \le l \le R$. Denoting by $\mathcal{S}_l$ the set of permutations of $\{1,\ldots,l\}$, we have
\begin{equation} \label{eq:s2}
S := \sum_{\sigma\in\mathcal{S}_l} a_{\sigma(1),\sigma(2)} \, a_{\sigma(2),\sigma(3)} \cdots a_{\sigma(l-1),\sigma(l)} \, a_{\sigma(l),\sigma(1)} = 0
\end{equation}
\end{lem}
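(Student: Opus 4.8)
The plan is to regard $S$ as a sum over directed Hamiltonian cycles on the vertex set $\{1,\ldots,l\}$ and to collapse cycles of length $l$ to cycles of length $l-1$ by means of the three-term relation \eqref{eq:a}. The starting observation is that the product $a_{\sigma(1),\sigma(2)}\cdots a_{\sigma(l),\sigma(1)}$ depends on $\sigma$ only through the cyclic sequence $\sigma(1)\to\cdots\to\sigma(l)\to\sigma(1)$, i.e.~only through the directed Hamiltonian cycle $H$ it determines; since each such cycle arises from exactly $l$ permutations (one for each choice of starting point), we get $S=l\sum_H w(H)$, where $w(H):=\prod_{(i,j)\in H} a_{i,j}$ and the sum runs over all directed Hamiltonian cycles on $\{1,\ldots,l\}$. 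It therefore suffices to show $\sum_H w(H)=0$.

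First I would single out the vertex~$l$. Given a directed Hamiltonian cycle $H$ on $\{1,\ldots,l\}$, let $p$ and $q$ be the predecessor and the successor of $l$ in $H$, and let $H'$ be the cycle on $\{1,\ldots,l-1\}$ obtained by deleting $l$ and inserting the edge $p\to q$. Since $l\ge3$, the indices $p,q,l$ are pairwise distinct, so \eqref{eq:a} applies and gives $a_{p,l}\,a_{l,q}=a_{p,q}\,(a_{p,l}+a_{l,q})$; as $H$ and $H'$ share all edges except that $H$ uses $p\to l\to q$ where $H'$ uses $p\to q$, comparing the two weights yields $w(H)=(a_{p,l}+a_{l,q})\,w(H')$.

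Next I would compute $\sum_H w(H)$ by first fixing the collapsed cycle $H'$. The cycles $H$ that collapse to a given $H'$ are precisely the $l-1$ cycles obtained by inserting $l$ into one of the $l-1$ edges of $H'$, so the partial sum over this fibre equals $w(H')\sum_{(p,q)\in H'}(a_{p,l}+a_{l,q})$. Here is the decisive cancellation: in $H'$ each vertex $j\in\{1,\ldots,l-1\}$ is the tail of exactly one edge and the head of exactly one edge, so $\sum_{(p,q)\in H'}(a_{p,l}+a_{l,q})=\sum_{j=1}^{l-1}(a_{j,l}+a_{l,j})=0$ by the antisymmetry $a_{j,l}=-a_{l,j}$. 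Summing over all $H'$ gives $\sum_H w(H)=0$, hence $S=0$. For $l=3$ the collapsed cycle $H'$ is simply the two-cycle $1\to2\to1$ and nothing changes; the hypothesis $l\ge3$ is exactly what ensures $p\ne q$, so that \eqref{eq:a} can be used.

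I expect the only delicate point to be the bookkeeping in the first two paragraphs: verifying that the weight is genuinely a function of the cyclic class of $\sigma$, and that ``insert $l$ into an edge of $H'$'' really defines an $(l-1)$-to-one correspondence between directed Hamiltonian cycles on $\{1,\ldots,l\}$ and those on $\{1,\ldots,l-1\}$. One can avoid the cycle language and argue directly on permutations, writing $\sigma(s)=l$, applying \eqref{eq:a} to the factor $a_{\sigma(s-1),l}\,a_{l,\sigma(s+1)}$, and then summing over the position $s$ and over the admissible values of $\sigma(s\pm1)$; the cancellation is the same, but in my view it is displayed most transparently in the Hamiltonian-cycle picture above.
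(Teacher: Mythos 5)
Your argument is correct: the reduction $S=l\sum_H w(H)$ over directed Hamiltonian cycles, the collapse at the vertex $l$ via \eqref{eq:a} (valid since $p,q,l$ are distinct when $l\ge 3$), and the fibre-wise cancellation $\sum_{(p,q)\in H'}(a_{p,l}+a_{l,q})=\sum_{j=1}^{l-1}(a_{j,l}+a_{l,j})=0$ are all sound, and the counting ($l$ permutations per cycle, $l-1$ cycles per collapsed cycle $H'$, including the degenerate two-cycle when $l=3$) checks out. The paper uses the same two ingredients --- the identity \eqref{eq:a} and the antisymmetry of $A$ --- but organizes the cancellation differently: it applies \eqref{eq:a} to the last two factors $a_{\sigma(l-1),\sigma(l)}\,a_{\sigma(l),\sigma(1)}$ of every term (i.e.\ it collapses at the last \emph{position} of the permutation rather than at the \emph{vertex} $l$), writes $S=S_1+S_2$, and then cancels $S_2$ against $S_1$ globally by reindexing the whole sum with a fixed cyclic permutation $\tau$ of the first $l-1$ positions, using antisymmetry once. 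Your version trades that one-line reindexing for the cycle/fibre bookkeeping you flag yourself, but in exchange the cancellation becomes local (each fibre over a collapsed cycle $H'$ already sums to zero), which is arguably the more transparent combinatorial explanation of why $S$ vanishes; your closing remark about fixing the position $s$ with $\sigma(s)=l$ is essentially a hybrid of the two writings. Either route is a complete proof.
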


\begin{proof}
Let us define
$$
S_1 := \sum_{\sigma\in\mathcal{S}_l} a_{\sigma(1),\sigma(2)} \, a_{\sigma(2),\sigma(3)} \cdots a_{\sigma(l-1),\sigma(1)} \, a_{\sigma(l-1),\sigma(l)}
$$
and
$$
S_2 := \sum_{\sigma\in \mathcal{S}_l} a_{\sigma(1),\sigma(2)} \, a_{\sigma(2),\sigma(3)} \cdots a_{\sigma(l-1),\sigma(1)} \, a_{\sigma(l),\sigma(1)}
$$
By \eqref{eq:a}, we have $S=S_1+S_2$. Let now $\tau \in \mathcal{S}_l$ be the permutation defined  by $\tau(1)=l-1,\tau(2)=1,\tau(3)=2,\ldots,\tau(l-1)=l-2,\tau(l)=l$. We obtain
\begin{eqnarray*}
S_2 & = & \sum_{\sigma \in \mathcal{S}_l} a_{\sigma\tau(1),\sigma\tau(2)} \, a_{\sigma\tau(2),\sigma\tau(3)} \cdots a_{\sigma\tau(l-1),\sigma\tau(1)} \, a_{\sigma\tau(l),\sigma\tau(1)}\\
& = & \sum_{\sigma\in S_l} a_{\sigma(l-1),\sigma(1)} \, a_{\sigma(1),\sigma(2)} \cdots a_{\sigma(l-2),\sigma(l-1)} \, a_{\sigma(l),\sigma(l-1)} = - S_1
\end{eqnarray*}
which completes the proof.
\end{proof}

{\em Proof of Theorem \ref{thm:det}.} 
By definition,
$$
\det(B)=\sum_{\sigma \in \mathcal{S}_R} \varepsilon(\sigma) \, \prod_{1 \le n \le R} a_{n,\sigma(n)}  \, c_n^2
$$
Now, every permutation $\sigma$ is a product of cycles, say $F_1, \, F_2, \ldots, F_k$. Let us denote by 
$n_1,n_2,\ldots,n_k$ the respective cardinalities of these cycles and let us set
$$
S(F_i) := \sum_{s_1,s_2,\ldots,s_{n_i} \vert \{ s_1,s_2,\ldots,s_{n_i} \} = F_i} a_{s_1,s_2} \, a_{s_2,s_3} \cdots a_{s_{n_i-1},s_{n_i}} \, a_{s_{n_i},s_1}
$$
In the above expression for $\det(B)$, the contribution of the permutations having these sets as support for their cycles is
$$
(-1)^{n_1+n_2+\ldots+n_k-k} \, \prod_{i=1}^k \, S(F_i) \, \prod_{r=1}^R c_r^2
$$
By \eqref{eq:s2} and the fact that the main diagonal is zero, a non-zero contribution can therefore only occur when all cycles are of cardinality $2$, which proves the theorem. \hfill $\square$

\begin{rem}
The above statement allows to recover part of the conclusion of Lemma \ref{lem:3}. First notice that by Theorem \ref{thm:det} and for all $J \subset \{1,\ldots,R\}$,
$\det(B_J)$, where $B_J=(b_{l,m})_{l,m \in J}$, is also a polynomial in $b_{l,m}^2$. Define then
$$
\sigma_k = \sum_{\substack{J \subset \{1,\ldots,R\} \\ |J|=k}} \prod_{i \in J} \lambda_i
$$
where $\lambda_1, \ldots, \lambda_R$ are the eigenvalues of $B$. Notice that
\begin{equation} \label{eq:ev_det}
\sigma_k = \sum_{\substack{J \subset \{1,\ldots,R\} \\ |J|=k}} \det(B_J)
\end{equation}
Indeed, let $P$ be the polynomial defined as $P(x)=\prod_{1 \le i \le R} (x-\lambda_i)$. We observe that on one hand, the matrix-valued version of this polynomial is given by 
$$
P(x) = \prod_{1 \le i \le R} (x-\lambda_i I) = x^R + \sum_{k=1}^R x^{R-k} \, (-1)^k \sum_{\substack{J \subset \{1,\ldots,R\} \\ |J|=k}} \prod_{i \in J} \lambda_i = x^R + \sum_{1 \le k \le R} x^{R-k} \, (-1)^k \, \sigma_k
$$
while on the other hand,
$$
P(x) = \prod_{i=1}^R (x-\lambda_i) = \det(xI-B) = x^R + \sum_{k=1}^R x^{R-k} \, (-1)^k \sum_{\substack{J \subset \{1,\ldots,R\}\\ |J|=k}} \det(B_J)
$$
so identifying the coefficients, we obtain equality \eqref{eq:ev_det}. This implies that $\sigma_k$ is also a polynomial in $b_{l,m}^2$. Finally, for $s_l = \sum_{1 \le i \le R} \lambda_i^l$, we have the following recursion, also known as Newton-Girard's formula:
$$
s_l = \sum_{1 \le i \le l-1} (-1)^{i-1} \sigma_i \, s_{l-i} + (-1)^{l-1} \, l \, \sigma_l
$$
For example, $s_0=n$,$s_1=\sigma_1$, $s_2 = s_1 \, \sigma_1-2\sigma_2$, $s_3 = s_2 \, \sigma_1 - s_1 \sigma_2 + 3 \sigma_3$, etc. We therefore find by induction that for all $k$, $(-1)^k \, \mathrm{Tr}(B^{2k}) = (-1)^k \, s_{2k}$ is also a polynomial in $b_{l,m}^2$, but this alone does not guarantee the positivity of the coefficients, obtained in Lemma \ref{lem:3} above.
\end{rem}


\subsection{Formulas regarding the eigenvalues and eigenvectors of $A(\mathbf{x})$ and $B(\mathbf{x},\mathbf{c})$}

We first state the following lemma, which has important consequences on the eigenvalues of the matrices $A(\mathbf{x})$ and $B(\mathbf{x},\mathbf{c})$, as highlighted hereafter. The approach taken below generalizes the method initiated by Montgomery and Vaughan in \cite{MV73}.

\begin{lem} \label{lem:5}
a) Let $\mathbf{u}=(u_1,\ldots,u_R)^T$ be an eigenvector of $A(\mathbf{x})$ for the eigenvalue $i\mu$.
Then for $1 \le n \le R$, we have
\begin{equation} \label{eq:mu_a}
\mu^2 \, \vert u_n\vert^2 = \sum_{1 \le m \le R} a_{m,n}^2 \, (\vert u_m\vert^2 + 2 \, \Re(u_n \, \overline{u_m}))
\end{equation}
b) Let $\mathbf{u}=(u_1,\ldots,u_R)^T$be an eigenvector of $B(\mathbf{x},\mathbf{c})$ for the eigenvalue $i\mu$.
Then for $1 \le n \le R$, we have
\begin{equation} \label{eq:mu_b}
\mu^2 \, \vert u_n \vert^2 = \sum_{1 \le m \le R} \, a_{m,n}^2 \, (c_n^2 \, c_m^2 \, \vert u_m \vert^2 + 2 \, c_n^3 \, c_m \, \Re(u_n \, \overline{u_m}))
\end{equation}
\end{lem}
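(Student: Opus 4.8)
The plan is to prove both identities by the device used by Montgomery and Vaughan in \cite{MV73}: square the $n$-th component of the eigenvalue equation, and then use the algebraic relation \eqref{eq:a} to collapse the resulting double sum, substituting the eigenvector equation (and its complex conjugate) back in at the right moment. I would first carry out part a) in detail and then treat part b) as the same computation performed through the change of variables $w_j := c_j u_j$.

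\emph{Part a).} Since $A$ is real and antisymmetric, its eigenvalues are purely imaginary, so $\mu\in\RR$; the equation $A\mathbf{u}=i\mu\mathbf{u}$ reads $\sum_{1\le m\le R} a_{n,m}u_m = i\mu u_n$ for each $n$, and conjugation (using that the $a_{n,m}$ and $\mu$ are real) gives $\sum_m a_{n,m}\overline{u_m} = -i\mu\overline{u_n}$. Multiplying these,
$$\mu^2|u_n|^2 = \Big|\sum_m a_{n,m}u_m\Big|^2 = \sum_{1\le m,l\le R} a_{n,m}\,a_{n,l}\,u_m\overline{u_l}.$$
Because $a_{n,n}=0$, the diagonal part $m=l$ contributes exactly $\sum_m a_{m,n}^2|u_m|^2$, which is already the first term of \eqref{eq:mu_a}; it remains to show that the part with $m\ne l$ equals $2\sum_m a_{m,n}^2\,\Re(u_n\overline{u_m})$. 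On that part the indices $n,m,l$ are pairwise distinct, so \eqref{eq:a} together with the antisymmetry $a_{n,m}=-a_{m,n}$ gives $a_{n,m}a_{n,l} = -a_{m,l}a_{m,n}-a_{m,l}a_{n,l}$, splitting the off-diagonal sum as $-S_1-S_2$ with $S_1=\sum a_{m,l}a_{m,n}u_m\overline{u_l}$ and $S_2=\sum a_{m,l}a_{n,l}u_m\overline{u_l}$ (both over $m,l\ne n$, $m\ne l$).

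In $S_1$ I would do the inner sum over $l$: the conjugated eigenvector equation gives $\sum_{l\ne m}a_{m,l}\overline{u_l}=-i\mu\overline{u_m}$, so after subtracting the excluded term $l=n$ one gets $-i\mu\overline{u_m}-a_{m,n}\overline{u_n}$, whence $S_1 = -i\mu\sum_{m}a_{m,n}|u_m|^2-\sum_m a_{m,n}^2 u_m\overline{u_n}$. Treating $S_2$ symmetrically (inner sum over $m$, using $\sum_{m\ne l}a_{m,l}u_m=-i\mu u_l$, subtracting the $m=n$ term, and then using antisymmetry) gives $-S_2 = -i\mu\sum_l a_{l,n}|u_l|^2+\sum_l a_{l,n}^2 u_n\overline{u_l}$. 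Since the off-diagonal part equals $-S_1-S_2$, the two $i\mu$-linear sums cancel, leaving $\sum_m a_{m,n}^2(u_m\overline{u_n}+u_n\overline{u_m})=2\sum_m a_{m,n}^2\Re(u_n\overline{u_m})$, which is \eqref{eq:mu_a}.

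\emph{Part b).} Assume first all $c_j\ne 0$ (the general case reduces to a principal submatrix: a vanishing $c_j$ annihilates the $j$-th row and column of $B$, and both sides of \eqref{eq:mu_b} are unaffected accordingly). Put $w_j:=c_j u_j$; then $B\mathbf{u}=i\mu\mathbf{u}$ becomes $c_n\sum_m a_{n,m}w_m = i\mu u_n$, hence $\mu^2|u_n|^2 = c_n^2\sum_{m,l}a_{n,m}a_{n,l}w_m\overline{w_l}$, and the computation of part a) goes through word for word with $u_j$ replaced by $w_j$: the only change is that the conjugated relation now reads $\sum_{l\ne m}a_{m,l}\overline{w_l}=-i\mu\overline{u_m}/c_m$, but since $w_m\overline{u_m}/c_m=|u_m|^2$ the $i\mu$-linear sums are literally the same $c$-independent sums as in part a) and cancel in the same way. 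Tracking the surviving constants, the diagonal part gives $\sum_m a_{m,n}^2 c_n^2 c_m^2|u_m|^2$ and, using $\Re(w_n\overline{w_m})=c_n c_m\Re(u_n\overline{u_m})$, the off-diagonal part gives $\sum_m 2 c_n^3 c_m a_{m,n}^2\Re(u_n\overline{u_m})$, which is \eqref{eq:mu_b}. The step most likely to cause trouble is the bookkeeping of the excluded indices $l=n$ and $m=n$ when passing from the full eigenvector sums to the restricted ones — those correction terms are precisely what produce the $\Re(u_n\overline{u_m})$ contribution — together with checking that the two $i\mu$-linear terms appear with opposite signs, which is where the antisymmetry of $A$ is used.
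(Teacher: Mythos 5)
Your proof is correct and takes essentially the same route as the paper's: square the $n$-th component of the eigenvalue equation, separate the diagonal terms, apply \eqref{eq:a} together with antisymmetry to the off-diagonal double sum, and resubstitute the eigenvector equation and its conjugate so that the two $i\mu$-linear sums cancel. The only difference is that the paper proves \eqref{eq:mu_b} directly (never dividing by $c_m$, so no case distinction for vanishing weights is needed, with \eqref{eq:mu_a} as the special case $c\equiv 1$), whereas your substitution $w_j=c_j u_j$ forces the---correct but superfluous---reduction to nonzero $c_j$.
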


\begin{proof}
Clearly, \eqref{eq:mu_a} is a particular case of \eqref{eq:mu_b} (with all $c_n=1$). In what follows, we prove \eqref{eq:mu_b} directly.

Our starting assumption is $B\mathbf{u}=i\mu\mathbf{u}$, i.e.~$\sum_{1 \le m \le R} b_{n,m} \, u_m = i\mu \, u_n$.
Taking the modulus square on both sides, we obtain
$$
\mu^2 \, \vert u_n \vert^2 =\sum_{\substack{1 \le l,m \le R\\ l \ne n, \, m \ne n}} b_{n,m} \, b_{n,l} \, u_m \, \overline{u}_l
$$
(Notice that the sum can be taken over $l \ne n, \, m \ne n$ as $b_{n,n}=0$). Therefore,
\begin{equation} \label{eq:s1s2}
\mu^2 \, \vert u_n \vert^2 = c_n^2 \sum_{\substack{1 \le l,m \le R \\ l \ne n, m \ne n}} c_l \, c_m \, a_{n,m} \, a_{n,l} \, u_m \, \overline{u}_l
= c_n^2 \, (S_1+S_2)
\end{equation}
where $S_1$ corresponds to the terms in the sum with $l=m$, i.e.
\begin{equation} \label{eq:s1}
S_1 =\sum_{\substack{1 \le m \le R \\ m \ne n}} c_m^2 \, a_{m,n}^2 \, \vert u_m \vert^2
\end{equation}
and
$$
S_2=\sum_{\substack{1 \le l,m \le R \\ l\ne m, \, l \ne n, \, m\ne n}} c_l \, c_m \, a_{n,m} \, a_{n,l} \, u_m \, \overline{u}_{l}
$$
As $l,m,n$ are all distinct in the above sum, we can use \eqref{eq:a} and the antisymmetry of $A$ gives
$$
a_{n,m} \, a_{n,l} = a_{l,m} \, a_{n,l} + a_{m,l} \, a_{n,m}
$$
so
\begin{equation} \label{eq:s3s4}
S_2 = \sum_{\substack{1 \le l,m \le R \\ l \ne m, \, l \ne n, \, m \ne n}} c_l \, c_m \, (a_{l,m} \, a_{n,l} + a_{m,l} \, a_{n,m}) \, u_m \, \overline{u}_l = S_3 + S_4 
\end{equation}
with
\begin{eqnarray*}
S_3 & = & \sum_{\substack{1 \le l,m \le R \\ l \ne m, \, l \ne n, \, m \ne n}} c_l \, c_m \, a_{l,m} \, a_{n,l} \, u_m \, \overline{u}_l
= \sum_{\substack{1 \le l,m \le R \\ l \ne m, \, l \ne n, \, m \ne n}} b_{l,m} \, a_{n,l} \, u_m \, \overline{u}_l
= \sum_{\substack{1 \le l \le R\\ l \ne n}} a_{n,l} \, \overline{u}_l \sum_{\substack{1 \le m \le R\\ m \ne l, \; m \ne n}} b_{l,m} \, u_m
\end{eqnarray*}
As $\mathbf{u}$ is an eigenvector of $B$, it follows that
$$
S_3 = \sum_{\substack{1 \le l \le R \\ l \ne n}} a_{n,l} \, \overline{u}_l \, (i\mu \, u_l - b_{l,n} u_n)
$$
Likewise, noticing that $\overline{\mathbf{u}}$ is also an eigenvector of $B$ (with corresponding eigenvalue $-i\mu$), we obtain
$$
S_4 = \sum_{\substack{1 \le m \le R \\ m \ne n}} a_{n,m} \, u_m \sum_{\substack{1 \le l \le R \\ l \ne n}} b_{m,l} \, \overline{u}_l
= \sum_{\substack{1 \le m \le R \\ m \ne n}} a_{n,m} \, u_m \, (-i\mu\overline{u}_m - b_{m,n} \, \overline{u}_n)
$$
From \eqref{eq:s3s4}, we deduce that
$$
S_2 = S_3 + S_4 = - \sum_{\substack{1 \le m \le R \\ m \ne n}} a_{n,m} \, b_{m,n} \, (\overline{u}_m \, u_n +  u_m \, \overline{u}_n)
= 2 \sum_{\substack{1 \le m \le R \\ m \ne n}} a_{m,n} \, b_{m,n} \, \Re(u_m \, \overline{u}_n)
$$
Now, using this together with \eqref{eq:s1s2} and \eqref{eq:s1}, we finally obtain
$$
\mu^2 \, \vert u_n \vert^2 = \sum_{\substack{1 \le m \le R \\ m \ne n}} c_n^2 \, \left( c_m^2 \, a_{m,n}^2 \, \vert u_m \vert^2
+ 2 \, c_m \, c_n \, a_{m,n}^2 \, \Re(u_m \, \overline{u}_n) \right)
$$
which completes the proof.
\end{proof}

One of the many consequences of Lemma \ref{lem:5} is the following.

\begin{cor} \label{cor:1}
If $c_1,\ldots,c_R$ are all non-zero, then the eigenvalues of $B(\mathbf{x},\mathbf{c})$ are all distinct.
\end{cor}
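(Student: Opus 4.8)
The plan is to use part b) of Lemma~\ref{lem:5} to rule out eigenvectors with a vanishing coordinate, and then to deduce simplicity of every eigenvalue from this. Throughout I will use that $B:=B(\mathbf{x},\mathbf{c})$ is real and antisymmetric, so that $iB$ is Hermitian; hence $B$ is normal, all of its eigenvalues are purely imaginary of the form $i\mu$ with $\mu\in\RR$ (including $\mu=0$, which occurs when $R$ is odd), and Lemma~\ref{lem:5} b) genuinely applies to every eigenvector of $B$.

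The key step is the claim: \emph{if $\mathbf{u}$ is an eigenvector of $B$ for the eigenvalue $i\mu$ and $u_n=0$ for some $n$, then $\mathbf{u}=\mathbf{0}$}. To prove it I would specialize \eqref{eq:mu_b} to this index $n$. The left-hand side vanishes since $u_n=0$, and every cross term $2c_n^3c_m\,\Re(u_n\overline{u_m})$ on the right also vanishes for the same reason, leaving
$$
0=c_n^2\sum_{\substack{1\le m\le R\\ m\ne n}}a_{m,n}^2\,c_m^2\,|u_m|^2 .
$$
This is a sum of non-negative terms, so each one is zero; since $c_n\ne0$, since $a_{m,n}^2=1/(x_m-x_n)^2>0$ for $m\ne n$ (the $x_k$ being distinct), and since every $c_m\ne0$ by hypothesis, we get $u_m=0$ for all $m\ne n$, and hence $\mathbf{u}=\mathbf{0}$. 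Equivalently, a non-zero eigenvector of $B$ has all coordinates non-zero.

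Granting this, the rest is routine. Given two eigenvectors $\mathbf{u},\mathbf{v}$ of $B$ sharing the eigenvalue $i\mu$, the claim forces $v_1\ne0$, so $\mathbf{w}:=\mathbf{u}-(u_1/v_1)\mathbf{v}$ lies in the same eigenspace and has $w_1=0$; by the claim $\mathbf{w}=\mathbf{0}$, i.e.\ $\mathbf{u}$ and $\mathbf{v}$ are proportional. Thus every eigenspace of $B$ is one-dimensional, and since $B$ is normal its geometric and algebraic multiplicities agree; hence each of the $R$ eigenvalues of $B$ is simple, so they are pairwise distinct.

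I do not expect a real obstacle here: the computation in the key step is short, and the only point needing a moment's attention is checking that Lemma~\ref{lem:5} b) is licit for every eigenvalue — which it is, because the antisymmetry of $B$ pins all eigenvalues to the imaginary axis, and because \eqref{eq:mu_b} is only ever invoked at an index where $u_n=0$, so the factor $\mu^2$ on its left-hand side plays no role and the argument goes through verbatim for $\mu=0$ as well.
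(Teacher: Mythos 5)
Your proof is correct and follows essentially the same route as the paper: you invoke \eqref{eq:mu_b} at an index where the eigenvector component vanishes to force the whole vector to vanish, and then conclude that each eigenspace of the normal matrix $B$ is one-dimensional. The only difference is that you spell out the linear-combination and diagonalizability details that the paper's proof leaves implicit.
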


\begin{proof}
Indeed, if in the basis of eigenvectors of $B$, there were two eigenvectors corresponding to the same eigenvalue, then it would be possible to find a linear combination of them (which is also an eigenvector) such that one component (say $u_n$) would be equal to zero. Then by \eqref{eq:mu_b}, we would have
$$
\sum_{1 \le m \le R} a_{m,n}^2 \, c_n^2 \, c_m^2 \vert u_m \vert^2=0
$$
which is impossible, given the assumption made.
\end{proof}

A more precise version of Lemma \ref{lem:5}.b) reads as follows.

\begin{lem} \label{lem:6}
Let $\mathbf{u}=\mathbf{v}+i\mathbf{w}$ be an eigenvector of $B(\mathbf{x},\mathbf{c})$ $($with $\mathbf{v}=\Re(\mathbf{u})$, $\mathbf{w}=\Im(\mathbf{u})$) corresponding to the eigenvalue $i\mu$, then
\begin{equation} \label{eq:6temp}
\mu^2 \, v_n^2 = \sum_{1 \le m \le R} b_{n,m}^2 \, w_m^2 + 2 \, c_n^2 \sum_{\substack{1 \le m \le R \\ m \ne n}} a_{n,m} \, w_m \, (\mu \, v_m \, -b_{m,n} \, w_n)
\end{equation}
Moreover, if $\mu \ne 0$, then $\Vert \mathbf{v} \Vert = \Vert \mathbf{w} \Vert$, while if $\mu=0$, then $\det(B)=0$, so one of the eigenvectors corresponding to this eigenvalue is real.
\end{lem}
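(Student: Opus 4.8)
The plan is to work directly from the eigenvalue equation $B\mathbf{u}=i\mu\mathbf{u}$. Since $B=B(\mathbf{x},\mathbf{c})$ is a \emph{real} matrix, splitting this equation into its real and imaginary parts (with $\mathbf{v}=\Re(\mathbf{u})$, $\mathbf{w}=\Im(\mathbf{u})$) yields the coupled real system
\[
B\mathbf{v}=-\mu\,\mathbf{w}, \qquad B\mathbf{w}=\mu\,\mathbf{v}.
\]
To derive \eqref{eq:6temp} I would start from the $n$-th component of the second relation, $\mu\,v_n=\sum_{1\le m\le R} b_{n,m}\,w_m$, square it, and write $\mu^2 v_n^2=\sum_{l,m\ne n} b_{n,m}\,b_{n,l}\,w_m\,w_l$ (the restriction to $l,m\ne n$ being free because $b_{n,n}=0$). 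The diagonal part $l=m$ produces $\sum_{m} b_{n,m}^2\,w_m^2$, and for the off-diagonal part I would factor $b_{n,m}\,b_{n,l}=c_n^2\,c_m\,c_l\,a_{n,m}\,a_{n,l}$ and apply \eqref{eq:a} together with the antisymmetry of $A$, in the form $a_{n,m}\,a_{n,l}=a_{l,m}\,a_{n,l}+a_{m,l}\,a_{n,m}$, exactly as in the proof of Lemma \ref{lem:5}. Each of the two resulting pieces rearranges, after recognising $c_l\,c_m\,a_{l,m}=b_{l,m}$ and re-using $\sum_{m\ne l,n} b_{l,m}\,w_m=\mu\,v_l-b_{l,n}\,w_n$, into $c_n^2\sum_{m\ne n} a_{n,m}\,w_m\,(\mu\,v_m-b_{m,n}\,w_n)$; summing the two identical pieces supplies the factor $2$ and hence \eqref{eq:6temp}. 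The bookkeeping here --- the index restrictions $l\ne m$, $l,m\ne n$ and the correct invocation of \eqref{eq:a} --- is the only delicate point, but it is routine given the template already used for Lemma \ref{lem:5}.b).

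For the norm identity I would exploit the antisymmetry $B^{T}=-B$ by computing the scalar $\mathbf{v}^{T}B\mathbf{w}$ in two ways. Directly, $\mathbf{v}^{T}B\mathbf{w}=\mathbf{v}^{T}(\mu\mathbf{v})=\mu\,\Vert\mathbf{v}\Vert^{2}$; transposing the scalar, $\mathbf{v}^{T}B\mathbf{w}=\mathbf{w}^{T}B^{T}\mathbf{v}=-\mathbf{w}^{T}B\mathbf{v}=-\mathbf{w}^{T}(-\mu\mathbf{w})=\mu\,\Vert\mathbf{w}\Vert^{2}$. Hence $\mu\,(\Vert\mathbf{v}\Vert^{2}-\Vert\mathbf{w}\Vert^{2})=0$, so $\Vert\mathbf{v}\Vert=\Vert\mathbf{w}\Vert$ whenever $\mu\ne 0$.

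Finally, if $\mu=0$ then $B\mathbf{u}=0$ with $\mathbf{u}\ne 0$, which forces $\det(B)=0$; moreover, taking real and imaginary parts of $B\mathbf{u}=0$ gives $B\mathbf{v}=B\mathbf{w}=0$, and since $\mathbf{u}\ne 0$ at least one of the real vectors $\mathbf{v},\mathbf{w}$ is non-zero, hence a real eigenvector for the eigenvalue $0$. I expect the extraction of \eqref{eq:6temp}, rather than these two short concluding arguments, to be the main (though still essentially mechanical) obstacle.
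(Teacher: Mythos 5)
Your proposal is correct and follows essentially the same route as the paper: squaring the $n$-th component of $B\mathbf{w}=\mu\mathbf{v}$, splitting into the diagonal part and an off-diagonal part treated with \eqref{eq:a} exactly as in Lemma \ref{lem:5}.b) (the two resulting pieces coincide, giving the factor $2$), and then evaluating $\mathbf{v}^T B\mathbf{w}$ in two ways via the antisymmetry of $B$ to get $\mu(\Vert\mathbf{v}\Vert^2-\Vert\mathbf{w}\Vert^2)=0$. Your explicit treatment of the $\mu=0$ case is a slight elaboration of what the paper leaves implicit, and it is fine.
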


\begin{proof}
Applying the proof method of Lemma \ref{lem:5} gives
$$
\mu^2 \, v_n^2 = \Big(\sum_{1 \le m \le R} b_{n,m} \, w_{m} \Big)^2 = \sum_{1 \le m \le R} b_{n,m}^2 \, w_m^2 + \sum_{\substack{1 \le l,m \le R \\ l \ne m}} b_{n,m} \, b_{n,l} \, w_m \, w_l = S_1 + S_2
$$
with
$$
S_1 = \sum_{1 \le m \le R} b_{n,m}^2 \, w_m^2
$$
and
$$
S_2=\sum_{\substack{1 \le l,m \le R \\ l \ne m}} b_{n,m} \, b_{n,l} \, w_m \, w_l = c_n^2 \sum_{\substack{1 \le l,m \le R\\ l \ne m}} \, c_l \, c_m \, a_{n,m} \, a_{n,l} \, w_m \, w_l = c_n^2 \, (S_3+S_4)
$$
with again
\begin{eqnarray*}
S_3 & = & \sum_{\substack{1 \le l,m \le R \\ l \ne m, \, l \ne n, \, m \ne n}} \, c_l \, c_m \, a_{l,m} \, a_{n,l} \, w_m \, w_l =
\sum_{\substack{1 \le l \le R\\ l \ne n}} a_{n,l} w_l \, \sum_{\substack{1 \le m \le R \\ m \ne n, \, m \ne l}} b_{l,m} \, w_m\\
& = & \sum_{\substack{1 \le l \le R \\ l \ne n}} a_{n,l} \, w_l \, (\mu \, v_l - b_{l,n} \, w_n)
\end{eqnarray*}
and likewise,
$$
S_4 = \sum_{\substack{1 \le m \le R \\ m \ne n}} a_{n,m} \, w_m \, \sum_{\substack{1 \le l \le R \\ l \ne m, l \ne n}} b_{m,l} \, w_l
= \sum_{\substack{1 \le m \le R\\ m \ne n}} a_{n,m} \, w_m \, (\mu \, v_m - b_{m,n} w_n)
$$
Observing that $S_3=S_4$, we obtain the formula \eqref{eq:6temp}.

Finally, we have by assumption $B(\mathbf{v}+i\mathbf{w}) = i\mu \, (\mathbf{v}+i\mathbf{w})$, so
$$
B \, \mathbf{w} = \mu \, \mathbf{v} \quad \text{and} \quad B \, \mathbf{v} = -\mu \, \mathbf{w}
$$
Consequently,
$$
\mu \, \Vert \mathbf{w} \Vert^2 = \mu \, \mathbf{w}^T \, \mathbf{w}
= (-B \, \mathbf{v})^T \, \mathbf{w} =  (B^T \, \mathbf{v})^T \, \mathbf{w} =
\mathbf{v}^T \, B \, \mathbf{w} = \mu \, \Vert \mathbf{v} \Vert^2
$$
so for $\mu \ne 0$, we have $\Vert \mathbf{v} \Vert = \Vert \mathbf{w} \Vert$.
\end{proof}

Finally, let us mention the following nice formula.

\begin{lem} \label{lem:7}
Let $\mathbf{u}$ be an eigenvector of $B$ corresponding to the eigenvalue $\mu$,  Then
$$
\Big| \sum_{1 \le r \le R} c_r \, u_r \Big|^2 = \sum_{1 \le r \le R} \vert c_r \, u_r \vert^2
$$
\end{lem}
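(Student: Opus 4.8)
The plan is to reduce the claimed identity --- equivalently, the vanishing of $\sum_{r\ne s} c_r c_s\,u_r\,\overline{u}_s$ --- to a single application of the eigenvector equation, after bringing the nodes $x_n$ into play. The mechanism is the one-step version of relation \eqref{eq:a}, namely $b_{n,m}(x_n-x_m)=c_nc_m$ for $m\ne n$. First I would fix $n$ and evaluate $\sum_{m} b_{n,m}(x_n-x_m)u_m$ in two ways. On one hand, since the $m=n$ term vanishes ($b_{n,n}=0$), it equals $\sum_{m\ne n} c_n c_m u_m = c_n\big(P - c_n u_n\big)$, where $P:=\sum_{r} c_r u_r$. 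On the other hand, splitting $x_n-x_m$ and using $B\mathbf{u}=\mu\mathbf{u}$, it equals $x_n (B\mathbf{u})_n - (BX\mathbf{u})_n = \mu\,x_n u_n - (BX\mathbf{u})_n$, where $X:=\mathrm{diag}(x_1,\dots,x_R)$. Equating and reading off coordinates gives the vector identity
$$
BX\mathbf{u} \;=\; \mu\, X\mathbf{u} \;-\; P\,\mathbf{c} \;+\; C\,\mathbf{u},
$$
with $\mathbf{c}:=(c_1,\dots,c_R)^T$ and $C:=\mathrm{diag}(c_1^2,\dots,c_R^2)$.

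The second step is to pair this identity with $\overline{\mathbf{u}}$. Left-multiplying by $\overline{\mathbf{u}}^T$, the term $\overline{\mathbf{u}}^T BX\mathbf{u}$ equals $\mu\,\overline{\mathbf{u}}^T X\mathbf{u}$: indeed $\overline{\mathbf{u}}^T B = (B^T\overline{\mathbf{u}})^T = -(B\overline{\mathbf{u}})^T = -(\overline{\mu}\,\overline{\mathbf{u}})^T = \mu\,\overline{\mathbf{u}}^T$, using that $B$ is real and antisymmetric (so $B\overline{\mathbf{u}}=\overline{\mu}\,\overline{\mathbf{u}}$ and its eigenvalue $\mu$ is purely imaginary, $\overline{\mu}=-\mu$). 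Hence the two $X$-terms cancel; and since $\overline{\mathbf{u}}^T\mathbf{c}=\overline{P}$ (the $c_r$ being real) and $\overline{\mathbf{u}}^T C\mathbf{u}=\sum_{r} c_r^2|u_r|^2=\sum_{r}|c_r u_r|^2$, what remains is $0 = -|P|^2 + \sum_{r}|c_r u_r|^2$, which is exactly the asserted equality. I note that this route uses neither $\mu\ne 0$, nor that the $c_r$ are non-zero, nor Lemma \ref{lem:6}.

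The only genuine obstacle is the first step: recognizing that multiplying the eigenvector equation by $x_n$ and invoking $b_{n,m}(x_n-x_m)=c_nc_m$ is precisely what collapses the bilinear sum into the single scalar $P$. Once the identity $BX\mathbf{u}=\mu X\mathbf{u}-P\mathbf{c}+C\mathbf{u}$ is in hand, the rest is a one-line pairing, the only delicate point being the sign bookkeeping in $\overline{\mathbf{u}}^T B=\mu\,\overline{\mathbf{u}}^T$, where the antisymmetry of $B$ enters. (If, as elsewhere in the paper, one writes the eigenvalue as $i\mu$ with $\mu$ real, the same computation goes through with $\overline{i\mu}=-i\mu$.)
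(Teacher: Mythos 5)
Your proof is correct and is essentially the paper's argument in coordinatewise form: your identity $b_{n,m}(x_n-x_m)=c_nc_m$ and the resulting relation $BX\mathbf{u}=\mu X\mathbf{u}-P\mathbf{c}+C\mathbf{u}$ amount to computing the commutator $XB-BX$ (whose off-diagonal entries are $c_rc_s$), and pairing with $\overline{\mathbf{u}}^T$ using $\overline{\mathbf{u}}^TB=\mu\,\overline{\mathbf{u}}^T$ is exactly the paper's evaluation of $\overline{\mathbf{u}}^T(XB-BX)\mathbf{u}=0$. The only cosmetic difference is that the paper writes the eigenvalue as $i\mu$ and the commutator as $XCAC-CACX$, while you keep $\mu$ purely imaginary and argue entrywise.
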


\begin{proof}
Let $C=\mathrm{diag}(c_1,\ldots,c_R)$ and $X=\mathrm{diag}(x_1,\ldots,x_R)$. Then
$$
\overline{\mathbf{u}}^T \, (XCAC-CACX) \, \mathbf{u}=\overline{\mathbf{u}}^T \, M \, \mathbf{u}
$$
where $m_{r,s} = c_r \, c_s$ for $r \ne s$ and $0$ otherwise. Therefore,
$$
\overline{\mathbf{u}}^T \, M \, \mathbf{u} = \Big| \sum_{1 \le r \le R} c_ r \, u_r \Big|^2 - \sum_{1 \le r \le R} | c_r \, u_r |^2
$$
On the other hand,
$$
\overline{\mathbf{u}}^ T \, (XCAC-CACX) \, \mathbf{u} = \overline{\mathbf{u}}^T \, (XB-BX) \, \mathbf{u} = \overline{\mathbf{u}}^T \, X \, i \mu \, \mathbf{u} - i \mu \, \overline{\mathbf{u}}^T \, X \, \mathbf{u} = 0
$$
as $\overline{\mathbf{u}}^T (-B) = \overline{\mathbf{u}}^T \, B^T = (B \, \overline{\mathbf{u}})^T = (-i\mu \overline{ \mathbf{u}})^T = -i\mu \, \overline{\mathbf{u}}^T$. The result follows.
\end{proof}


\subsection{Back to the spectral norm}
Lemma \ref{lem:5} also allow us to deduce the following bounds on the spectral norm of $A(\mathbf{x})$.

\begin{cor} \label{cor:2}
\begin{equation} \label{eq:7}
\max_{1 \le m \le R} \sum_{1\le n\le R} a_{m,n}^2 \le \Vert A(\mathbf{x}) \Vert^2 \le 3 \, \max_{1 \le m \le R} \sum_{1 \le n \le R}a_{m,n}^2 
\end{equation}
\end{cor}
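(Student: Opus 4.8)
The plan is to prove the two inequalities in \eqref{eq:7} separately. The left-hand one is an immediate consequence of the definition of the operator norm and needs no appeal to Lemma \ref{lem:5}; the right-hand one is the genuine content and will be extracted from Lemma \ref{lem:5}.a). Throughout, write $M := \max_{1\le m\le R}\sum_{1\le n\le R} a_{m,n}^2$, and note that since $a_{m,n}^2=a_{n,m}^2$ we also have $M = \max_{1\le n\le R}\sum_{1\le m\le R} a_{m,n}^2$.

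For the lower bound, I would simply apply $A(\mathbf{x})$ to the standard basis vector $\mathbf{e}_m$: since $\Vert \mathbf{e}_m\Vert=1$, $\Vert A(\mathbf{x})\Vert^2 \ge \Vert A(\mathbf{x})\,\mathbf{e}_m\Vert^2 = \sum_{1\le n\le R} a_{n,m}^2$, and taking the maximum over $m$ gives $\Vert A(\mathbf{x})\Vert^2 \ge M$.

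For the upper bound, recall that $iA(\mathbf{x})$ is Hermitian, so $A(\mathbf{x})$ has a purely imaginary eigenvalue $i\mu$ with $|\mu| = \Vert A(\mathbf{x})\Vert$; let $\mathbf{u}$ be a corresponding eigenvector. Summing the identity \eqref{eq:mu_a} over $n=1,\dots,R$ yields
$$
\mu^2 \sum_{n} |u_n|^2 = \sum_{n,m} a_{m,n}^2\,|u_m|^2 + 2\sum_{n,m} a_{m,n}^2\,\Re(u_n\,\overline{u_m}).
$$
In the first double sum, interchanging the order of summation and using $\sum_n a_{m,n}^2 \le M$ gives $\sum_{n,m} a_{m,n}^2|u_m|^2 \le M\,\Vert\mathbf{u}\Vert^2$. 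In the second, I would use $2\Re(u_n\overline{u_m}) \le |u_n|^2+|u_m|^2$; the two resulting double sums are each bounded by $M\,\Vert\mathbf{u}\Vert^2$, by the symmetry $a_{m,n}^2=a_{n,m}^2$ and the definition of $M$. Altogether $\mu^2\,\Vert\mathbf{u}\Vert^2 \le 3M\,\Vert\mathbf{u}\Vert^2$, hence $\Vert A(\mathbf{x})\Vert^2 = \mu^2 \le 3M$.

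I do not expect a real obstacle: once Lemma \ref{lem:5}.a) is available, the computation is routine, and the only point requiring a little care is keeping track of which index the bound $\sum a_{m,n}^2 \le M$ is applied to — this is where the symmetry of $a_{m,n}^2$ in $m,n$ enters. One could remark in passing that the constant $3$ is presumably not sharp, since the cross term is handled wastefully, but sharpening it is not the aim of the corollary.
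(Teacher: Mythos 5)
Your proof is correct and follows essentially the paper's route: the lower bound by applying $A$ to the canonical basis vectors, and the upper bound from Lemma \ref{lem:5}.a) combined with the elementary estimate $2\,\Re(u_n\,\overline{u_m})\le |u_n|^2+|u_m|^2$. The only (harmless) difference is that you sum identity \eqref{eq:mu_a} over all $n$, whereas the paper evaluates it at a single index $n$ maximizing $|u_n|$ and uses $|u_m|^2\le |u_n|^2$; both variants yield the same constant $3$.
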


\begin{proof}
The left-hand side inequality is clear, as the $m^{th}$ column of $A$ is the image by $A$ of the $m^{th}$ canonical vector. For the right-hand side inequality, we use \eqref{eq:mu_b}, choosing $n$ such that $\vert u_n \vert^2 \ge \vert u_m \vert^2$ for all $1 \le m \le R$, and $\mu=\Vert A \Vert$. We therefore obtain
$$
\Vert A \Vert^2 \, \vert u_n\vert^2 = \sum_{1 \le m \le R} a_{m,n}^2 (\vert u_m \vert^2 + 2\Re(u_n \, \overline{u_{m}})) \le \sum_{1 \le m \le R} a_{m,n}^2 (\vert u_m \vert^2+\vert u_m\vert^2+\vert u_n\vert^2)
$$
so
$$
\Vert A \Vert^2 \, \vert u_n \vert^2 \le 3 \sum_{1 \le m \le R} a_{m,n}^2 \, \vert u_n\vert^2
$$
\end{proof}


\subsection{The classical Hilbert matrix $T_R$}
First of all, notice that the upper bound in equation \eqref{eq:7} allows to recover to the original upper bound on $\Vert T_R \Vert$, where $T_R$ is the Hilbert matrix defined in the introduction:
$$
\Vert T_R \Vert^2 \le \max_{1 \le m \le R} 3 \sum_{1 \le n \le R, \, n \ne m} \frac{1}{(m-n)^2} < 3 \cdot 2 \sum_{n \ge 1} \frac{1}{n^2} = \pi^2
$$
Let us now come back to the convergence speed of $\Vert T_R \Vert$ towards $\pi$, already mentioned in Section \ref{sec:survey}. We shall now prove inequality \eqref{eq:conv_speed}, namely the fact that there exist positive constants $a$ and $b$ such that
$$
\frac{a}{R} < \pi - \Vert T_R \Vert < \frac{b \, \log(R)}{R} \quad (R \ge 2)
$$
The lower bound can be deduced from Lemma \ref{lem:5}. From equation \eqref{eq:mu_b}, we indeed see that if $R=2S+1$, then
$$
\Vert T_R \Vert^2 < 6\sum_{k=1}^S \frac{1}{k^2} = \pi^2 - 6 \sum_{k>S} \frac{1}{k^2} < \pi^2-6\sum_{k>S} \frac{1}{k(k+1)} = \pi^2 -\frac{6}{S+1}
$$
so
$$
\pi - \Vert T_R \Vert > \frac{6}{(S+1) \, (\pi+\Vert T_R \Vert)} >\frac{3}{\pi \, (S+1)}
$$
which is indeed of the type $\frac{a}{R} < \pi - \Vert T_R \Vert$.

Another way to prove this lower bound is to follow the Grenander-Szeg\"o approach of Section \ref{sec:toeplitz}. Let us first recall equation \eqref{eq:fourier}:
$$
\mathbf{u}^* T_R \mathbf{u} = \int_0^{2\pi} f(x) \, |\phi(x)|^2 \, dx
$$
where $f(x) = i \, (x-\pi)$ for $x \in (0,2\pi)$ and $\phi(x)=\frac{1}{\sqrt{2\pi}} \sum_{1\le n \le R} u_n \, \exp(i(n-1)x)$, where both $\int_0^{2\pi} |\phi(x)|^2 \, dx = \Vert \mathbf{u} \Vert^2 =1$. Therefore,
\begin{equation} \label{eq:lowerbound}
\pi - \mathbf{u}^* iT_R \mathbf{u} = \int_0^{2\pi} x \, |\phi(x)|^2 \, dx \quad \text{so} \quad
\pi - \Vert T_R \Vert = \inf_{\phi \in E(R)} \int_0^{2\pi} x \, |\phi(x)|^2 \, dx
\end{equation}
where
$$
E(R) = \left\{\phi(x)= \frac{1}{\sqrt{2\pi}} \sum_{1 \le n \le R} u_n \, \exp(i (n-1) x) \, \bigg| \, {\mathbf u} \in \CC^R, \, \sum_{1 \le n \le R} |u_n|^2 = 1 \right\}
$$
It remains therefore to show that the term on the right-hand side of \eqref{eq:lowerbound} is bounded below by a term of order $1/R$. To this end, let us consider $\phi \in E(R)$
and $c>0$:
\begin{eqnarray*}
\int_0^c |\phi(x)|^2 \, dx & = & \frac{1}{2\pi} \sum_{1 \le m,n \le R} u_m \, \overline{u_n} \int_0^c \exp(i (m-n)x) \, dx\\
& \le & \frac{c}{2\pi} \sum_{1 \le m,n \le R} |u_m| \, |u_n| = \frac{c}{2\pi} \, \left( \sum_{1 \le n \le R} 1 \, |u_n| \right)^2 \le \frac{c R}{2\pi} \, \sum_{1 \le n \le R}|u_n|^2 = \frac{cR}{2\pi} 
\end{eqnarray*}
where we have used Cauchy-Schwarz' inequality. Setting therefore $c=\frac{\pi}{R}$, we obtain $\int_0^{\pi/R} |\phi(x)|^2 \, dx \le \frac{1}{2}$. This in turn implies that for all $\phi \in E(R)$,
$$
\int_0^{2\pi} x \, |\phi(x)|^2 \, dx \ge \int_{\pi/R}^{2\pi} x \, |\phi(x)|^2 \, dx \ge \frac{\pi}{R} \int_{\pi/R}^{2\pi}|\phi(x)|^2 \, dx \ge \frac{\pi}{2R}
$$
which settles the lower bound in equation \eqref{eq:conv_speed}.

In order to establish the upper bound, we need to find a function $\phi \in E(R)$ such that
\begin{equation} \label{eq:upperbound}
\int_0^{2\pi} x \, |\phi(x)|^2 \, dx \le \frac{b \log R}{R}
\end{equation}
for some constant $b>0$. This will indeed ensure the existence of a vector ${\mathbf u}$, the one associated to the function $\phi \in E(R)$, such that $|{\mathbf u}^* T_R {\mathbf u}| \ge \pi - \frac{b \log R}{R}$, implying the result.

In view of equation \eqref{eq:upperbound}, our goal in the following is to find $\phi \in E(R)$ such that for both $c$ and $\varepsilon$ small,
\begin{equation} \label{eq:phi_delta}
\int_c^{2\pi} |\phi(x)|^2 \, dx \le \varepsilon
\end{equation}
This would indeed imply that
\begin{equation} \label{eq:phi_delta2}
\int_0^{2\pi} x \, |\phi(x)|^2 \, dx \le c \int_0^c |\phi(x)|^2 \, dx + 2 \pi \int_c^{2\pi} |\phi(x)|^2 \, dx \le c + 2\pi \varepsilon
\end{equation}
Let $M$ and $N$ be positive integers such that $N(M-1)+1 \le R$ and let
$$
g(x) = \Big( \sum_{0 \le m \le M-1} \exp(i m x)  \Big)^N
$$
The function $\phi$ defined as $\phi(x)=g(x-c/2) \Big/ \sqrt{\int_0^{2\pi} |g(x)|^2 \, dx}$ belongs to $E(R)$. Our claim is that for an appropriate choice of $M$ and $N$, $\phi$ satisfies \eqref{eq:phi_delta} with both $c$ and $\varepsilon$ small.

We first show the following estimate on $\int_0^{2\pi} |g(x)|^2 \, dx$.

\begin{lem} \label{lem:8}
$$
\frac{M^{2N}}{N(M-1)+1} \le \frac{1}{2\pi} \int_0^{2\pi} |g(x)|^2 \, dx \le M^{2N-1}
$$
\end{lem}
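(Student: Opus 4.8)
The plan is to expand $g$ as a trigonometric polynomial and then invoke Parseval's identity. Writing $\sum_{0\le m\le M-1}\exp(imx)=\sum_{m=0}^{M-1}z^m$ with $z=e^{ix}$, we get $g(x)=\bigl(\sum_{m=0}^{M-1}z^m\bigr)^N=\sum_{k=0}^{N(M-1)}a_k\,z^k$, where the coefficient $a_k$ equals the number of tuples $(m_1,\ldots,m_N)\in\{0,\ldots,M-1\}^N$ with $m_1+\cdots+m_N=k$; in particular all $a_k\ge 0$. Since the functions $x\mapsto e^{ikx}$ are orthonormal for the inner product $\frac{1}{2\pi}\int_0^{2\pi}\cdot\,dx$, Parseval gives $\frac{1}{2\pi}\int_0^{2\pi}|g(x)|^2\,dx=\sum_{k=0}^{N(M-1)}a_k^2$, so the whole lemma reduces to estimating $\sum_k a_k^2$.

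Next I would record three elementary facts about the coefficients. First, $\sum_{k=0}^{N(M-1)}a_k=M^N$ (evaluate $g$ at $x=0$, i.e.~$z=1$). Second, there are exactly $N(M-1)+1$ indices $k$ in play, namely $k\in\{0,\ldots,N(M-1)\}$. Third, $a_k\le M^{N-1}$ for every $k$: once $m_1,\ldots,m_{N-1}$ have been chosen arbitrarily in $\{0,\ldots,M-1\}$ — which can be done in $M^{N-1}$ ways — the value $m_N=k-(m_1+\cdots+m_{N-1})$ is forced, so at most $M^{N-1}$ tuples contribute to $a_k$.

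The two bounds then follow at once. For the lower bound, Cauchy--Schwarz applied to $(a_k)_{0\le k\le N(M-1)}$ against the all-ones vector gives $\bigl(\sum_k a_k\bigr)^2\le\bigl(N(M-1)+1\bigr)\sum_k a_k^2$, hence $\sum_k a_k^2\ge M^{2N}/\bigl(N(M-1)+1\bigr)$. For the upper bound, $\sum_k a_k^2\le(\max_k a_k)\sum_k a_k\le M^{N-1}\cdot M^N=M^{2N-1}$. Combining these with the Parseval identity above yields the claimed double inequality.

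There is essentially no serious obstacle here; the only point requiring a moment's thought is the bound $\max_k a_k\le M^{N-1}$, and even that is immediate from the "free choice of $N-1$ coordinates" argument. One could alternatively argue that $a_k$ is the $k$-th coefficient of the product of $\sum_{m}z^m$ (coefficients bounded by $1$) with $\bigl(\sum_m z^m\bigr)^{N-1}$ (nonnegative coefficients summing to $M^{N-1}$), and a convolution of a sequence bounded by $1$ with a nonnegative sequence is bounded by that sequence's total mass.
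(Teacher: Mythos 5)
Your proof is correct, and its core is the same as the paper's: expand $g$ as a trigonometric polynomial with nonnegative coefficients, use orthogonality (Parseval) to reduce the integral to $\sum_k a_k^2$, and get the lower bound from Cauchy--Schwarz against the all-ones vector exactly as the paper does with its coefficients $b_{l,N}$. The only divergence is the upper bound: the paper first uses the symmetry $b_{l,N}=b_{N(M-1)-l,N}$ to identify $\sum_l b_{l,N}^2$ with the central coefficient $b_{N(M-1),2N}$ of $P_{2N}=P_1\,P_{2N-1}$ and then bounds it by $P_{2N-1}(1)\le M^{2N-1}$, whereas you bound $\sum_k a_k^2\le\bigl(\max_k a_k\bigr)\sum_k a_k\le M^{N-1}\cdot M^N$ using the counting bound $a_k\le M^{N-1}$. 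Both routes are equally elementary and tight to the same order; your closing remark (viewing $a_k$ as a convolution of a $0$--$1$ sequence with the coefficients of $P_{N-1}$) is in fact the same mechanism the paper exploits one level up, applied to $P_{2N}$ rather than $P_N$.
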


\begin{proof}
Let $K$ be a positive integer and define the polynomial
$$
P_K(t) = \Big( \sum_{0 \le m \le M-1} t^m \Big)^K = \sum_{0 \le l \le K(M-1)} b_{l,K} \, t^l
$$
Notice that clearly, $b_{l,K} = b_{m,K}$ if $l+m=K(M-1)$. Moreover,
$$
|g(x)|^2 = \left|P_N(\exp(i x)) \right|^2 = \sum_{0 \le l,m \le N(M-1)} b_{l,N} \, b_{m,N} \, \exp(i (l-m) x)
$$
so
$$
\int_0^{2\pi} |g(x)|^2 \, dx = 2 \pi \sum_{0 \le l \le N(M-1)} b_{l,N}^2 =  2 \pi \sum_{0 \le l \le N(M-1)} b_{l,N} \, b_{N(M-1)-l,N} = 2 \pi \, b_{N(M-1),2N}
$$
What remains therefore to be proven is
$$
\frac{M^{2N}}{N(M-1)+1} \le b_{N(M-1),2N} \le M^{2N-1}
$$
Using Cauchy-Schwarz' inequality, we obtain
$$
b_{N(M-1),2N} = \sum_{0 \le l \le N(M-1)} b_{l,N}^2 \ge \frac{\left(\sum_{0 \le l \le N(M-1)} b_{l,N} \right)^2}{N(M-1)+1}=\frac{P_N(1)^2}{N(M-1)+1}=\frac{M^{2N}}{N(M-1)+1}
$$
On the other hand, $P_{2N}(t) = P_1(t) \, P_{2N-1}(t)$, so
$$
b_{N(M-1),2N} = \sum_{(N-1) (M-1) \le l \le N (M-1)} b_{l,2N-1} \le P_{2N-1}(1) \le M^{2N-1} 
$$
which completes the proof.
\end{proof}

We now set out to prove \eqref{eq:phi_delta}. Recall that $\phi(x)=g(x-c/2)\Big/\sqrt{\int_0^{2\pi} |g(x)|^2 \, dx}$. As a result of the previous lemma,
$$
\int_c^{2\pi} |\phi(x)|^2 \, dx \le \frac{N(M-1)+1}{M^{2N}} \, \frac{1}{2\pi} \int_c^{2\pi} |g(x-c/2)|^2 \, dx
= \frac{N(M-1)+1}{M^{2N}} \, \frac{1}{2\pi} \int_{c/2}^{2\pi-c/2} |g(x)|^2 \, dx 
$$
Notice that
$$
|g(x)|^2 = \Big| \sum_{0 \le m \le M-1} \exp(imx) \Big|^{2N} = \left( \frac{\sin(Mx/2)}{\sin(x/2)} \right)^{2N}
$$
so
$$
\int_{c/2}^{2\pi-c/2} |g(x)|^2 \, dx = 2 \int_{c/2}^\pi |g(x)|^2 \, dx \le 2 \int_{c/2}^\pi \left( \frac{\pi \, \sin(Mx/2)}{x} \right)^{2N} \, dx
$$
as for $0 \le x \le \pi$, $\sin(x/2) \ge x/\pi$. This implies
$$
\int_{c/2}^{2\pi-c/2} |g(x)|^2 \, dx \le 2 \int_{c/2}^\infty \left( \frac{\pi}{x} \right)^{2N} \, dx
= 2 \pi \int_{c/2\pi}^\infty \frac{1}{y^{2N}} \, dy = \frac{2 \pi}{2N-1} \, \left(\frac{2\pi}{c}\right)^{2N-1}
$$
and correspondingly
$$
\varepsilon = \int_c^{2\pi} |\phi(x)|^2 \, dx \le \frac{N(M-1)+1}{M^{2N}} \, \frac{1}{2N-1} \, \left(\frac{2\pi}{c}\right)^{2N-1}
$$
Assuming $R \ge 3$ and defining $M:=\left[\frac{2R}{\log R} \right]$, $N:=\left[\frac{\log R}{2} \right]$ and $c:=\frac{\pi e \log R}{R}$ (where $[x]$ denotes the integer part of $x$), we verify that $M(N-1)+1 \le R$ (so $\phi \in E(R)$) and prove below that \eqref{eq:phi_delta} is satisfied with $\varepsilon=O(1/R)$. Indeed, as $M \ge \frac{R}{\log R}$ and $N(M-1)+1\le M(2N-1)$, we obtain
$$
\frac{N(M-1)+1}{M^{2N} \, (2N-1) \, (c/2\pi)^{2N-1}} = (cM/2\pi)^{1-2N} \, \frac{1+N(M-1)}{M(2N-1)} \le (cM/2\pi)^{1-2N} \le \exp(1-2N) \le \frac{e^3}{R}
$$
as $1-2N < 3 -\log R$. According to \eqref{eq:phi_delta2}, this finally leads to
$$
\int_0^{2\pi} x | \phi(x)|^2 \, dx \le \frac{\pi e \log R}{R} + \frac{2 \pi e^3}{R}
$$
which completes the proof of the upper bound in \eqref{eq:conv_speed}. As already mentioned, it has been conjectured in \cite{P85} that of the two bounds in \eqref{eq:conv_speed}, the upper bound is tight. We provide below some numerical simulation that supports this fact; on Figure \ref{fig:1}, the expression
$$
f(R) := (\pi - \Vert T_R \Vert) \, \frac{R}{\log R}
$$
is represented as a function of $R$, for values of $R$ ranging from $1$ to $10'000$:

\begin{figure}
\begin{center}
\includegraphics[width=6cm]{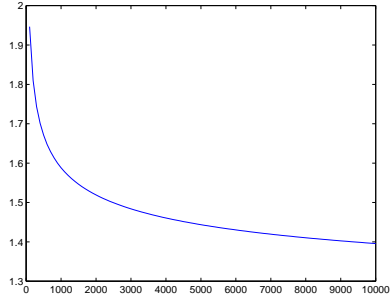}
\caption{Rescaled gap $f(R)$ between the spectral norm of the infinite-dimensional operator $T_\infty$ and that of the matrix $T_R$, as a function of $R \in \{1,\ldots,10'000\}$.}
\label{fig:1}
\end{center}
\end{figure}

Detailed facts can also be established about the eigenvectors of $T_R$. In order to ease the notation, suppose that $R=2S+1$ and that $T_R$ is indexed from $-S$ to $S$.

\begin{lem}
Let $\mathbf{u}$ be an eigenvector of $T_R$, corresponding to the eigenvalue $i\mu$, with $u_0=1$ (one can always multiply $\mathbf{u}$ by a constant in order to ensure that this is the case). Then for $0 \le n \le S$, we have 
$$
u_{-n}=-\overline{u_{n}}
$$
\end{lem}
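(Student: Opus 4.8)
The plan is to avoid any direct computation of components and instead exploit a reversal symmetry of $T_R$. Since $T_R$ is the special case $A(\mathbf{x})$ with $x_k=k$, its entries obey $(T_R)_{-m,-n}=\tfrac{1}{-m+n}=-(T_R)_{m,n}$. Writing $J$ for the flip operator on $\CC^{\{-S,\dots,S\}}$ given by $(J\mathbf{u})_n=u_{-n}$, this reads $J\,T_R\,J=-T_R$, i.e.~$T_RJ=-JT_R$. Moreover $T_R$ is real, so $T_R\mathbf{u}=i\mu\mathbf{u}$ gives $T_R\overline{\mathbf{u}}=-i\mu\,\overline{\mathbf{u}}$. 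Combining the two, $T_R(J\overline{\mathbf{u}})=-J\,T_R\overline{\mathbf{u}}=-J(-i\mu\,\overline{\mathbf{u}})=i\mu\,(J\overline{\mathbf{u}})$, so $J\overline{\mathbf{u}}$ is again an eigenvector of $T_R$ for the very same eigenvalue $i\mu$.

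Next I would invoke Corollary \ref{cor:1}: as all weights equal $1$, the eigenvalues of $T_R=A(\mathbf{x})$ are distinct, so each eigenspace is one-dimensional. Hence $J\overline{\mathbf{u}}=\kappa\,\mathbf{u}$ for some scalar $\kappa$, that is $\overline{u_{-n}}=\kappa\,u_n$ for all $n$; applying $J$ and conjugation once more yields $\mathbf{u}=|\kappa|^2\mathbf{u}$, so $|\kappa|=1$ and $u_{-n}=\overline{\kappa}\,\overline{u_n}$. Before extracting $\kappa$ I would confirm that the normalization $u_0=1$ is legitimate, i.e.~that $u_0\ne0$: taking $n=0$ in \eqref{eq:mu_a}, where $a_{m,0}=1/m\ne0$, the vanishing of $u_0$ would force $\sum_{m\ne0}a_{m,0}^2\,|u_m|^2=0$ and hence $\mathbf{u}=0$, a contradiction. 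The same remark disposes of the possibility $u_0=0$ in the degenerate case $\mu=0$ (for odd $R$ there is exactly one zero eigenvalue, with a one-dimensional eigenspace).

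It remains to pin down the unimodular constant $\kappa$, and this is where the whole content of the statement beyond the reversal symmetry resides: the sign appearing in $u_{-n}=-\overline{u_n}$ is precisely the value of $\overline{\kappa}$, which the relation $\overline{u_{-n}}=\kappa\,u_n$ forces to be read off from the central index. Evaluating at $n=0$ expresses $\kappa$ through the phase of $u_0$, so the identity in the statement amounts to matching this phase to the chosen normalization; this is also the place where the case $n=0$ of the claimed relation must be checked for consistency with $u_0=1$. I expect this sign determination—rather than the symmetry argument, which is routine—to be the main obstacle, since it is exactly the step that fixes $-\overline{u_n}$ versus $\overline{u_n}$.
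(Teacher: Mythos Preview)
Your approach is exactly the paper's: form the vector $v_n=-\overline{u_{-n}}$ (equivalently $-J\overline{\mathbf u}$), verify it is again an eigenvector for $i\mu$, invoke the one-dimensionality of the eigenspace (Corollary~\ref{cor:1}), and match the $0$-th coordinate to conclude $\mathbf v=\mathbf u$. The only difference is that you stop short of the final step, declaring the determination of $\kappa$ to be ``the main obstacle'' without actually computing it.

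That step is not an obstacle at all; it is a one-line evaluation, and your instinct that something is off there is correct. From $\overline{u_{-n}}=\kappa\,u_n$ at $n=0$ with $u_0=1$ you get $\kappa=1$, hence $u_{-n}=\overline{u_n}$, \emph{without} the minus sign. The stated identity is already inconsistent at $n=0$, since it would force $1=u_0=-\overline{u_0}=-1$. The paper's proof slips at the very same point: it asserts $v_0=1$ and concludes $\mathbf v=\mathbf u$, whereas in fact $v_0=-\overline{u_0}=-1$, so the correct conclusion from that argument is $\mathbf v=-\mathbf u$, i.e.\ $u_{-n}=\overline{u_n}$. In short, your symmetry argument is complete and correct; the ``obstacle'' you anticipated is a sign typo in the statement (and in the paper's proof), not a difficulty in your method. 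With the normalization $u_0=1$ the right relation is $u_{-n}=\overline{u_n}$; the printed form $u_{-n}=-\overline{u_n}$ would correspond instead to normalizing $u_0$ to be purely imaginary.
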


\begin{proof}
Define $\mathbf{v}$ by $v_n=-\overline{u_{-n}}$. Then
$$
(T_R\mathbf{v})_{-m} = \sum_{-S \le n \le S} \frac{v_n}{-m-n} = \sum_{-S\le n\le S} \frac{v_{-n}}{-m+n}
= -\sum_{-S\le n\le S} \frac{v_{-n}}{m-n}
$$
So
$$
(T_{R}\mathbf{v})_{-m} = \sum_{-S\le n\le S}\frac{\overline{u_n}}{m-n} = (T_R\mathbf{\overline{u}})_m
= (-i\mu \,\mathbf{\overline{u}})_m = i\mu \, v_{-m}
$$
i.e.~$\mathbf{v}$ is an eigenvector corresponding to the eigenvalue $i\mu$, with $v_0=1$. Thus, $\mathbf{v}=\mathbf{u}$  (as  the eigenspace corresponding to $i\mu$ is of dimension $1$).
\end{proof}

We finally conjecture that the following statement holds. Let $\mathbf{u}$ be the eigenvector corresponding to the largest eigenvalue $\mu$ in absolute value. Then
$$
\vert u_m \vert < \vert u_n\vert \quad \forall 0 \le m<n\le S
$$
This conjecture is confirmed numerically; on Figure \ref{fig:2}, we represent $|u_n|$ as a function of $n \in \{-S,\ldots,S\}$, for $S=1'000$.

\begin{figure}
\begin{center}
\includegraphics[width=6cm]{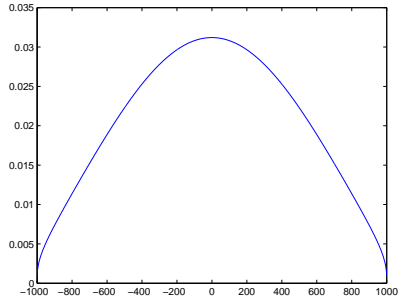}
\caption{Amplitude $\{|u_n|, \, -R \le n \le R\}$ of the eigenvector corresponding to the largest eigenvalue of $T_R$, with $R=1'000$.}
\label{fig:2}
\end{center}
\end{figure}

From the theoretical point of view, the above conjecture seems also reasonable, as $(-1)^k \, (T_R^{2k})_{n,n}$ (see Lemma \ref{lem:2}) should decrease as $n$ increases (in absolute value). If true, this fact would therefore hold in the limit $k \to \infty$, which would imply the conjecture on the eigenvector.

\end{document}